\documentclass[sn-mathphys,Numbered]{sn-jnl}

\usepackage{graphicx}%
\usepackage{multirow}%
\usepackage{amsmath,amssymb,amsfonts}%
\usepackage{amsthm}%
\usepackage{mathrsfs}%
\usepackage[title]{appendix}%
\usepackage{xcolor}%
\usepackage{textcomp}%
\usepackage{manyfoot}%
\usepackage{booktabs}%
\usepackage{algorithm}%
\usepackage{algorithmicx}%
\usepackage{algpseudocode}%
\usepackage{listings}%


\theoremstyle{thmstyleone}%
\newtheorem{theorem}{Theorem}
%

\theoremstyle{thmstyletwo}%
\newtheorem{example}{Example}%
\newtheorem{remark}{Remark}%
\newtheorem{lemma}{Lemma}%
\theoremstyle{thmstylethree}%
\newtheorem{definition}{Definition}%
\newtheorem{corollary}{Corollary}%
\raggedbottom

\begin{document}

\title[Article Title]{Finite Time Stability Analysis for  Fractional Stochastic Neutral Delay Differential Equations}
\author[1]{\fnm{Javad} \sur{A. Asadzade}}\email{javad.asadzade@emu.edu.tr}

\author[2]{\fnm{Nazim} \sur{I. Mahmudov}}\email{nazim.mahmudov@emu.edu.tr}
\equalcont{These authors contributed equally to this work.}

\affil[1]{\orgdiv{Department of Mathematics}, \orgname{Eastern Mediterranean University}, \orgaddress{\street{} \city{Mersin 10, 99628, T.R.}, \postcode{5380},\country{North Cyprus, Turkey}}}

\affil[2]{\orgdiv{Department of Mathematics}, \orgname{Eastern Mediterranean University}, \orgaddress{\street{} \city{Mersin 10, 99628, T.R.}, \postcode{5380},\country{North Cyprus, Turkey}}}

\affil[2]{\orgdiv{	Research Center of Econophysics}, \orgname{Azerbaijan State University of Economics (UNEC)}, \orgaddress{\street{Istiqlaliyyat Str. 6}, \city{Baku }, \postcode{1001},  \country{Azerbaijan}}}

\abstract{In this manuscript, we investigate a fractional stochastic neutral differential equation with  time delay, which includes both deterministic and stochastic components.  Our primary objective is to rigorously prove the existence of a unique solution that satisfies given initial conditions. Furthermore, we extend our research to investigate the finite-time stability of the system by examining trajectory behavior over a given period. We employ advanced mathematical approaches to systematically prove finite-time stability, providing insights on convergence and stability within the stated interval. Using illustrative examples, we strengthen this all-encompassing examination into the complicated dynamics and stability features of fractionally ordered stochastic systems with time delays. The implications of our results extend to various fields, such as control theory, engineering, and financial mathematics, where understanding the stability of complex systems is crucial.}

\keywords{Fractional stochastic neutral delay differential equations, Existence and uniqueness, Finite-time stability.}
\pacs[MSC Classification]{60H10, 34A34, 34D20}

\maketitle

\section{Introduction}\label{sec1}

Fractional differential equations extend the traditional ordinary and partial differential equations by allowing the order of differentiation to be any real or even complex number, rather than just a natural number. These equations are becoming more prevalent in modeling mathematical problems across various fields, including stability theory (\cite{4,7,9,10,11,14,15,17,21,22,25,27,28,29,31,41,42,44,48,52,53}), control theory (\cite{32,33,34,35,36,49,50}), stochastic analysis (\cite{2,3,4,8,9,10,11,12,13,14,15,16,17,18,20,27,31,43,44}), and positive time-continuous systems (\cite{1,37}). Recently, there has been a growing interest in fractional differential equations that incorporate the Caputo fractional time-derivative operator, driven by its applicability in numerous practical scenarios within science and engineering, as highlighted in recent literature (\cite{38}).

The incorporation of fractional derivatives provides a more accurate representation of memory and hereditary properties in various materials and processes, enhancing the modeling of phenomena such as viscoelasticity, anomalous diffusion, and signal processing. This has opened new avenues for research and application, leading to innovative solutions in both theoretical studies and practical implementations. As a result, fractional differential equations are now a critical tool in the modeling and analysis of complex systems across diverse scientific disciplines.

Given their ability to capture intricate system dynamics, fractional differential equations have also proven valuable in the study of finite-time stability. Finite-time stability analysis is particularly important when dealing with systems that need to achieve stability within a finite period, rather than asymptotically over an infinite time horizon. This type of stability is crucial in practical applications where immediate performance and response are required, such as in control systems, financial modeling, and various engineering processes.

Finite-time stability study is at the pinnacle of the field of stochastic differential equations (SDEs). It provides an understanding of the complex processes that occur within a short period. Finite-time stability research examines complex convergence and trajectory patterns in a short interval, while classic stability theories focus on asymptotic behavior over an infinite horizon.

The transformative contributions of K. Itô and H. P. McKean in the development of Itô calculus, a crucial tool for analyzing SDEs and comprehending the stability properties of stochastic systems, marked a significant milestone in this collective endeavor. Their work laid the foundation for a profound understanding of stochastic processes, which in turn facilitated significant advancements in the analysis of finite-time stability.

This confluence of mathematical pioneers collectively shaped the landscape of stability analysis, paving the way for further advancements in the theory of stochastic processes. Finite-time stability, gaining prominence, found applications in diverse fields including control theory, finance, and biology. In the realm of control theory and robotics, where temporal stability is paramount, controllers grounded in finite-time stability criteria offer robustness against uncertainties and disturbances. 

Ongoing research endeavors continue to propel the understanding of finite-time stability, with scholars developing novel mathematical tools and techniques to unravel intricacies and address challenges. The evolution of finite-time stability encapsulates a dynamic field, bridging deterministic stability concepts with the mathematical framework tailored for stochastic processes, thereby contributing to a profound understanding of systems characterized by both deterministic and stochastic components. (for extra information see, \cite{1}-\cite{54})

Neutral differential equations with a delay, also known as neutral differential-difference equations, are a type of ordinary or partial differential equation that includes both delayed values and delayed derivatives of the solution. These equations, both in their deterministic and stochastic forms, are extensively researched due to their significant applications in fields such as population dynamics, automatic control systems, and neural networks (\cite{51,52,53}).

Concerning the analytical solutions to neutral differential-difference equations, only a few papers have been published. Notably, Pospisil and Skripkova have derived an analytical representation of the initial value problem for linear systems of neutral differential equations with permutable matrices (\cite{54}).

This paper is devoted to the study of such linear systems of neutral differential equations with a delay. By assuming the linear parts to be given by pairwise permutable matrices, we derive a representation of the solution to a nonhomogeneous initial value problem using a matrix polynomial of a degree that depends on time. This approach provides a structured and efficient method for analyzing these complex systems, offering new insights and potential applications in various scientific and engineering fields.

Afterwards, in the fractional sense, Zhang et al. \cite{23} have investigated the representation of the general solution to a linear fractional neutral differential-difference system with a single constant delay. This further extends the understanding and applicability of neutral differential equations by incorporating fractional calculus, which provides a more comprehensive framework for modeling systems with memory and hereditary properties.

Subsequently,  Huseynov and Mahmudov (\cite{1}) addressed the problem depicted in Equation \eqref{IH}. Their study focused on the initial value problem associated with linear matrix coefficient systems of fractional-order neutral differential equations, encompassing two incommensurate constant delays in Caputo’s sense:

\begin{equation}\label{IH}
	\begin{cases}
		{^{C}}D^\lambda_0 y(t)= A_{0}y(t) + A_{1}y(t - h_{1})+ A_{2}D^\lambda_0 y(t - h_{2}) + f(t), \quad t\in[0,T],\\
		y(t) = \varphi(t), \quad t \in [-h, 0],\quad h=\max\{h_{1},h_{2}\}.
	\end{cases}
\end{equation}

Their research yielded exact analytical representations for solutions to both linear homogeneous and non-homogeneous neutral fractional-order differential-difference equation systems. This was accomplished by introducing newly defined delayed Mittag–Leffler type matrix functions.

Furthermore, their paper introduced a criterion for evaluating the positivity of a specific class of fractional-order linear homogeneous time-delay systems. They rigorously established the global existence and uniqueness of solutions for nonlinear fractional neutral delay differential equation systems using the contraction mapping principle within a weighted space of continuous functions, considering classical Mittag–Leffler functions.

Moreover, the authors obtained Ulam–Hyers stability results for solutions through a fixed-point approach. This comprehensive approach enhances our understanding of fractional-order neutral differential equations and provides valuable insights into their stability properties and solution behavior.

By motivated the above paper, we delve into the investigation of a neutral fractional order stochastic differential equation with a time delay, given by:

\begin{equation}\label{1}
	\begin{cases}
		{^{C}}D^\lambda_0 y(t)= A_{0}y(t) + A_{1}y(t - h_{1})+ A_{2}D^\lambda_0 y(t - h_{2}) + f(t, y(t), y(t - h_{1}),y(t - h_{2}))\\
		+\sigma(t, y(t), y(t - h_{1}),y(t - h_{2}))\frac{dW(t)}{dt}, \quad t\in[0,T],\\
		y(t) = \varphi(t), \quad t \in [-h, 0],\quad h=\max\{h_{1},h_{2}\}.
	\end{cases}
\end{equation}
\bigskip
Here are the key components of the equation:

$\bullet$ \( {^{C}}D^\lambda_0 y(t) \) represents a Caputo fractional derivative of order \( \lambda \) of the function \( y(t) \).

$\bullet$ \( A_{0}y(t) \), \( A_{1}y(t - h_{1}) \), and \( A_{2}D^\lambda_0 y(t - h_{2}) \) are linear terms in the system, where $A_0, A_1, A_2 \in \mathbb{R}^{n\times n}$.

$\bullet$ \( f(t, y(t), y(t - h_{1}),y(t - h_{2})) \) represents a nonlinear function of the system.

$\bullet$ \( \sigma(t, y(t), y(t - h_{1}),y(t - h_{2}))\frac{dW(t)}{dt} \) is a stochastic term involving a Wiener process \( W(t) \).

$\bullet$ \( y(t) = \varphi(t) \) sets the initial condition for the system in the interval \( [-h, 0] \).
\bigskip

In this paper, our main goal is to prove the existence and uniqueness of solutions for this system. By using a thorough mathematical analysis, we aim to demonstrate the model's well-posedness and the existence of a unique solution that complies with the initial conditions and system dynamics that are stated.

Moreover, we investigate finite-time stability for the fractional-order stochastic differential equation that is suggested. We examine the behavior of trajectories over a finite interval of time, providing significant details on elements of convergence and stability. We systematically demonstrate the system's finite-time stability by applying sophisticated mathematical approaches, which offer a clear comprehension of the solution's transient behavior within the given time frame.

This thorough examination is designed to contribute significantly to the comprehension of the intricate dynamics and stability properties inherent in fractional order stochastic systems with time delay. Our findings establish a robust foundation for further exploration of these systems and their applications across diverse scientific and engineering domains.
\bigskip

To enrich our article and establish connections with existing literature, we can draw parallels between our investigation of fractional-order stochastic systems with time delay and the concepts explored in the works of Shang (\cite{45,46,47}) concerning finite-time consensus in multi-agent systems. While seemingly disparate in focus, these studies share fundamental principles and mathematical methodologies that offer insightful connections.

Shang's research on finite-time consensus in multi-agent systems investigates the collective behavior of autonomous agents aiming to reach an agreement on a shared quantity within a finite time frame. Similarly, in our study, we examine the finite-time stability of fractional-order stochastic systems with time delay, focusing on understanding the transient behavior and convergence properties within a specified time interval.

The concept of finite-time consensus explored by Shang resonates with our investigation as both delve into understanding the dynamics of systems within a constrained temporal scope. By establishing finite-time stability in our fractional-order stochastic system, we provide a mathematical framework akin to achieving consensus within a finite duration. This parallel underscores the significance of temporal constraints in both studies and highlights the importance of understanding system behavior within limited time frames.

Furthermore, Shang's work on finite-time weighted average consensus and generalized consensus over a subset introduces nuanced approaches to achieving consensus considering varying agent weights and network topologies. In a similar vein, our analysis of fractional-order stochastic systems with time delay encompasses intricate dynamics influenced by nonlinearities and stochastic components. Drawing inspiration from Shang's methodologies, we can apply sophisticated mathematical techniques to elucidate the finite-time stability of our system, considering the impact of time delays and fractional-order dynamics on convergence properties.

In essence, while Shang's studies focus on consensus dynamics in multi-agent systems, our investigation into fractional-order stochastic systems with time delay shares common ground in understanding transient behavior and convergence within finite time frames. By bridging these seemingly disparate fields, we enrich our understanding of complex dynamical systems and pave the way for interdisciplinary insights that benefit diverse scientific and engineering domains. (for extra information see, \cite{45,46,47})

It should be stressed out that systems with multiple delays can exhibit rich and
complex behavior not seen in systems with a single delay. These dynamics include
oscillations, bifurcations, and chaotic behavior, which are important to understand
for predicting and controlling such systems. Incorporating multiple delays in models allows for a more accurate representation of systems where processes happen at different timescales. From a theoretical perspective, differential equations with two delays present interesting mathematical challenges and opportunities for developing new analytical and numerical methods. Our results can be extended to multiple delay neutral systems.
\section{Mathematical preliminaries and main lemma}
In this segment, we will delve into fractional calculus, presenting information on lemmas and theorems. Furthermore, we will demonstrate the proof for a main lemma, a crucial tool essential for deriving our main results.
\bigskip 

$\bullet$ \textbf{Gamma Function:} \cite{40}
The gamma function, denoted by $\Gamma(\lambda)$, is defined as the integral:
\begin{equation}
	\Gamma(\lambda) = \int_0^\infty t^{\lambda-1} e^{-t} \, dt,
\end{equation}
where $\lambda>0$.

$\bullet$ \textbf{Beta Function:} \cite{40}
The beta function, represented as $B(\lambda, \nu)$, is defined by the integral:
\begin{equation}
	B(\lambda, \nu) = \int_0^1 t^{\lambda-1} (1-t)^{\nu-1} \, dt,
\end{equation}
with $\lambda$ and $\nu$ being positive real numbers. The relationship between gamma and beta functions is given by:
\begin{equation}
	B(\lambda, \lambda) = \frac{\Gamma(\lambda) \Gamma(\nu)}{\Gamma(\lambda+\nu)}.
\end{equation}

$\bullet$ \textbf{Riemann-Liouville Fractional Integral:} \cite{40}
The Riemann-Liouville fractional integral of order $\lambda$ for a function $f(t)$ is defined as:
\begin{equation}
	{^{RL}}I_{0+}^{\lambda} f(t) = \frac{1}{\Gamma(\lambda)} \int_0^t (t-\tau)^{\lambda-1} f(\tau) \, d\tau,
\end{equation}
where $\lambda > 0$ is the order of the integral, and $\Gamma$ is the gamma function.

$\bullet$ \textbf{Caputo Fractional Derivative:} \cite{40}
The Caputo fractional derivative of order $\lambda$ for a function $f(t)$ is defined as:
\begin{equation}
	{^{C}}D_{0+}^{\lambda} f(t) = \frac{1}{\Gamma(n-\lambda)} \int_0^t (t-\tau)^{n-\lambda-1} f^{(n)}(\tau) \, d\tau,
\end{equation}
where $n-1 < \lambda < n$ is the order of the derivative, $\Gamma$ is the gamma function, and $f^{(n)}(\tau)$ represents the $n$th derivative of $f(t)$.

$\bullet$ \textbf{Mittag-Leffler Function:} \cite{40}
The Mittag-Leffler function, denoted by $E_{\lambda,\nu}(z)$, is defined as the series:
\begin{equation}
	E_{\lambda,\nu}(z) = \sum_{n=0}^{\infty} \frac{z^n}{\Gamma(\lambda n + \nu)},\quad \lambda>0,\nu>0, z\in \mathbb{C},
\end{equation}
where $\Gamma$ denotes the gamma function.

\bigskip
\begin{definition} (see, \cite{1})
	A perturbed matrix function of Mittag-Leffler type with two constant delays, denoted by $\mathcal{E}_{h_1,h_2}^{\lambda,\nu}(A_0, A_1, A_2;\cdot): \mathbb{R} \rightarrow \mathbb{R}^{n\times n}$, is defined for $\lambda > 0$ and $\nu \in \mathbb{R}$. This function is generated by piecewise nonpermutable matrices $A_0, A_1, A_2 \in \mathbb{R}^{n\times n}$ and is subject to two constant delays $h_1$ and $h_2$, both greater than 0. The definition is as follows:
	\begin{equation}
		\mathcal{E}_{h_1,h_2}^{\lambda,\nu}(A_0, A_1, A_2;t) :=
		\begin{cases}
			\emptyset, \quad -h \leq t < 0, \\
			I, \quad t = 0, \\
			\sum_{k=0}^{\infty} \sum_{m_1=0}^{\infty} \sum_{m_2=0}^{\infty} Q_{k+1}(m_1 h_1, m_2 h_2)\frac{(t - m_1 h_1 - m_2 h_2)_{+}^{k\lambda+\nu-1}}{\Gamma(k\lambda+\nu)} , & t \in \mathbb{R}^+,
		\end{cases}
	\end{equation}
	where
	\[
	(t - m_1 h_1 - m_2 h_2)_+ =
	\begin{cases}
		t - m_1 h_1 - m_2 h_2, & t \geq m_1 h_1 + m_2 h_2, \\
		0, & t < m_1 h_1 + m_2 h_2.
	\end{cases}
	\]	
\end{definition}
\begin{lemma} (refer to Huseynov et al. \cite{1})
	Assume $\lambda> 0$, $\nu\in \mathbb{R}$, $h_1, h_2 >0$, and $A_0, A_1, A_2 \in \mathbb{R}^{n\times n}$. The following relation holds:
	\[
	\begin{aligned}
		\| \mathcal{E}_{h_1, h_2}^{\lambda, \nu}(A_0, A_1, A_2; t) \| &\leq \mathcal{E}_{h_1, h_2}^{\lambda, \nu}(\| A_0 \|, \| A_1 \|, \| A_2 \|; t) \\
		&\leq t^{\nu-1}E_{\lambda, \nu}(\| A_0 \|, \| A_1 \|, \| A_2 \|; t),
	\end{aligned}
	\]
	
	where $E_{\lambda, \nu}(\| A_0 \|, \| A_1 \|, \| A_2 \|; t)$ is the norm defined as
	
	\[
	E_{\lambda, \nu}(\| A_0 \|, \| A_1 \|, \| A_2 \|; t) = \sum_{k=0}^{\infty} \sum_{\omega_1=0}^{\infty} \sum_{\omega_2=0}^{\infty} \| Q_{k+1}(\omega_1 h_1, \omega_2 h_2) \| t^{k\lambda + \nu - 1} \Gamma(k\lambda + \nu).
	\]
	
\end{lemma}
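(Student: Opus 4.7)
The plan is to establish the two inequalities separately, starting from the series representation of $\mathcal{E}_{h_1,h_2}^{\lambda,\nu}(A_0,A_1,A_2;t)$ for $t>0$. First I would push the matrix norm inside the triple sum by subadditivity, obtaining
\[
\bigl\|\mathcal{E}_{h_1,h_2}^{\lambda,\nu}(A_0,A_1,A_2;t)\bigr\|
\leq \sum_{k=0}^{\infty}\sum_{m_1=0}^{\infty}\sum_{m_2=0}^{\infty}
\|Q_{k+1}(m_1 h_1, m_2 h_2)\|\,\frac{(t-m_1 h_1-m_2 h_2)_+^{k\lambda+\nu-1}}{\Gamma(k\lambda+\nu)}.
\]
The key fact from the companion deterministic work \cite{1} is that each coefficient matrix $Q_{k+1}(m_1 h_1, m_2 h_2)$ is a finite sum, with \emph{nonnegative} combinatorial weights, of words (products) of length $k+1$ in the letters $A_0, A_1, A_2$. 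Iterating submultiplicativity $\|AB\|\leq\|A\|\|B\|$ on each word and subadditivity across the sum yields
\[
\|Q_{k+1}(m_1 h_1, m_2 h_2)\| \leq Q_{k+1}(\|A_0\|,\|A_1\|,\|A_2\|;\,m_1 h_1, m_2 h_2),
\]
where the right-hand side is exactly the same combinatorial expression evaluated at the scalar norms. Substituting this bound into the triple sum gives precisely the series representation of $\mathcal{E}_{h_1,h_2}^{\lambda,\nu}(\|A_0\|,\|A_1\|,\|A_2\|;t)$, which is the first inequality.

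For the second inequality, I would use the elementary estimate $(t-m_1 h_1-m_2 h_2)_+\leq t$ for $t\geq 0$, so that whenever $k\lambda+\nu-1\geq 0$ one has $(t-m_1 h_1-m_2 h_2)_+^{k\lambda+\nu-1}\leq t^{k\lambda+\nu-1}$. Factoring $t^{\nu-1}$ out of the resulting series recovers the Mittag-Leffler-type dominating function displayed in the statement.

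The main obstacle will be justifying the scalar bound on $\|Q_{k+1}\|$, since this requires unpacking the recursive/combinatorial definition of $Q_{k+1}$ from \cite{1} and confirming, by induction on $k$, that the expansion contains no sign cancellations; otherwise replacing matrices with their norms would not yield a valid upper bound. A secondary technical point is verifying absolute convergence of the majorizing triple series so that the term-by-term manipulations are legitimate, but this follows from the standard estimate that the word count at level $k$ grows at most like $3^k$ while the denominator $\Gamma(k\lambda+\nu)$ grows super-exponentially, ensuring the series (and hence the rearrangements) converge uniformly on compact subsets of $[0,T]$.
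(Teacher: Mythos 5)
The paper does not actually prove this lemma; it is imported verbatim from Huseynov and Mahmudov \cite{1}, so there is no in-paper argument to compare yours against. Judged on its own terms, your first inequality is handled correctly and by the standard route: the recursion $Q_{k+1}(m_1h_1,m_2h_2)=A_0Q_k(\cdot)+A_1Q_k(\cdot)+A_2Q_{k+1}(\cdot,(m_2-1)h_2)$ expresses each $Q_{k+1}$ as a sum of words in $A_0,A_1,A_2$ with coefficient $+1$, so an induction (lexicographic in $(k,m_2)$, since $Q_{k+1}$ appears on both sides) combined with submultiplicativity gives $\|Q_{k+1}(m_1h_1,m_2h_2)\|\leq Q_{k+1}(\|A_0\|,\|A_1\|,\|A_2\|;m_1h_1,m_2h_2)$, and the triangle inequality for the series does the rest. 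Your remark about absolute convergence via the $3^k$ word count against $\Gamma(k\lambda+\nu)$ is also the right justification for the term-by-term manipulation.

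The genuine gap is in the second inequality. Your bound $(t-m_1h_1-m_2h_2)_+^{k\lambda+\nu-1}\leq t^{k\lambda+\nu-1}$ is only valid when the exponent is nonnegative, and you flag this but do not resolve it. This is not a fringe case: the paper applies the lemma with $\nu=\lambda\in(0,1)$ and even with $\nu=0$ (the kernels $\mathcal{E}^{h_1,h_2}_{\lambda,\lambda}$ and $\mathcal{E}^{h_1,h_2}_{\lambda,0}$ appear throughout Sections 2 and 3), so the $k=0$ term has exponent $\nu-1<0$ and the termwise inequality reverses there, since a smaller positive base raised to a negative power is larger. As written, your argument does not deliver the claimed domination for the very parameter values the paper needs. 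You would either have to restrict to $\nu\geq 1$, treat the low-$k$ terms separately, or reformulate the majorant. Note also that the stated definition of $E_{\lambda,\nu}(\|A_0\|,\|A_1\|,\|A_2\|;t)$ in the lemma is internally inconsistent (the factor $\Gamma(k\lambda+\nu)$ appears multiplied rather than divided, and the prefactor $t^{\nu-1}$ combined with $t^{k\lambda+\nu-1}$ inside the sum double-counts $t^{\nu-1}$), so the precise target of the second inequality is ambiguous; your factoring of $t^{\nu-1}$ out of the series would produce $t^{\nu-1}\sum_k\sum_{\omega_1}\sum_{\omega_2}\|Q_{k+1}(\omega_1h_1,\omega_2h_2)\|\,t^{k\lambda}/\Gamma(k\lambda+\nu)$, which is presumably what is intended, but you should say so explicitly rather than matching a misprinted formula.
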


\begin{lemma}(refer to Huseynov et al. \cite{1})\label{l2}
	The solution of the following system
	
	\[
	\begin{cases}
		({^{C}}D^\lambda_{0+} y)(t) = A_0 y(t) + A_1 y(t - h_1) + A_2 ({^{C}}D^\lambda_{0+} y)(t- h_2)\\
		  +f(t,y(t),y(t-h_1),y(t-h_2)), \quad t \in [0, T], \\
	 	y(t)=\varphi(t), \quad -h \leq t \leq 0, \quad h = \max\{h_1, h_2\}, \quad h_1, h_2 > 0,
	\end{cases}
	\]
	
	can be represented as:
	
	\begin{align*}
		y(t)=&\mathcal{E}^{h_{1},h_{2}}_{\lambda,1}(A_{0},A_{1},A_{2};t)(\varphi(0)-A_{2}\varphi(-h_{2}))\\
		+&\int_{-h_{1}}^{0}\mathcal{E}^{h_{1},h_{2}}_{\lambda,\lambda}(A_{0},A_{1},A_{2};t-h_{1}-s)A_{1}\varphi(s)ds\\
		+&\int_{-h_{2}}^{0}\mathcal{E}^{h_{1},h_{2}}_{\lambda,0}(A_{0},A_{1},A_{2};t-h_{1}-s)A_{2}\varphi(s)ds\\
		+&\int_{0}^{t}\mathcal{E}^{h_{1},h_{2}}_{\lambda,\lambda}(A_{0},A_{1},A_{2};t-s)f(s,y(s),y(s-h_{1}),y(s-h_{2}))ds
	\end{align*}
	for $t \in [0, T]$, $h_1, h_2 > 0$, $h = \max(h_1, h_2)$.
\end{lemma}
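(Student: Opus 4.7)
The plan is to establish the representation by a Laplace-transform argument combined with the series structure of $\mathcal{E}^{h_1,h_2}_{\lambda,\nu}$, treating the nonlinear term $f(s,y(s),y(s-h_1),y(s-h_2))$ as a known forcing $g(s)$ and invoking a variation-of-parameters formula. This reduces the problem to the purely linear neutral delay equation, for which Huseynov et al.~\cite{1} already provides the required framework; the contribution here is to carefully track how each boundary contribution in the initial data $\varphi$ produces exactly one of the three non-integral summands in the statement, and how Duhamel's principle produces the fourth.

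First I would apply the Laplace transform to both sides on $[0,T]$, using $\mathcal{L}\{{^{C}}D^\lambda_{0+}y\}(s) = s^\lambda Y(s) - s^{\lambda-1}\varphi(0)$ for $0<\lambda<1$, together with the delayed-shift identity $\mathcal{L}\{y(t-h_i)\}(s) = e^{-sh_i}\bigl[Y(s) + \int_{-h_i}^{0} e^{-s\tau}\varphi(\tau)\,d\tau\bigr]$. Collecting terms gives an algebraic equation $G(s)\,Y(s) = R(s;\varphi) + \widehat{g}(s)$, where
\begin{equation*}
G(s) := s^\lambda I - A_0 - A_1 e^{-sh_1} - A_2 s^\lambda e^{-sh_2},
\end{equation*}
and $R(s;\varphi)$ gathers precisely the initial-data terms, including a contribution $s^{\lambda-1}(\varphi(0)-A_2\varphi(-h_2))$ from the Caputo derivative and the neutral term, together with boundary integrals weighted by $A_1 e^{-sh_1}$ and $A_2 s^\lambda e^{-sh_2}$.

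Next I would expand $G(s)^{-1}$ as a Neumann-type series in the small parameters $A_1 e^{-sh_1}$ and $A_2 s^\lambda e^{-sh_2}$ (valid for $\operatorname{Re}(s)$ sufficiently large, which is where the norm estimate of the preceding lemma justifies termwise inversion). Applying the shift rule $\mathcal{L}^{-1}\{e^{-cs}F(s)\} = f(t-c)\chi_{[c,\infty)}(t)$ to each term reproduces the triple sum in the definition of $\mathcal{E}^{h_1,h_2}_{\lambda,\nu}$. Matching the powers of $s$ against the factors $s^{-1}$, $s^{-\lambda}$, and $s^0$ picks out the three indices $\nu=1,\lambda,0$ that appear in the three initial-data summands, and applying Duhamel's formula to the forcing $\widehat{g}(s)$ produces the convolution with $\mathcal{E}^{h_1,h_2}_{\lambda,\lambda}$ in the last line.

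The main obstacle is the neutral term $A_2({^{C}}D^\lambda_{0+}y)(t-h_2)$: its Laplace transform mixes a factor $s^\lambda$ with the delay $e^{-sh_2}$, so one must carefully justify that $\varphi$ is smooth enough on $[-h_2,0]$ for $({^{C}}D^\lambda_{0+}y)(t-h_2)$ to be defined for $t\in[0,h_2]$, and must keep track of additional boundary contributions arising from the fractional derivative applied to the delayed segment. A secondary subtlety is verifying the convergence of the triple Neumann series uniformly on compact $t$-intervals before termwise Laplace inversion, which again follows from the norm bound $\|\mathcal{E}^{h_1,h_2}_{\lambda,\nu}\| \le t^{\nu-1}E_{\lambda,\nu}(\|A_0\|,\|A_1\|,\|A_2\|;t)$ supplied by the preceding lemma. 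Once these points are handled, the four summands in the claimed representation are read off directly from the inverse Laplace transform.
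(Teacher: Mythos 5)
The paper does not actually prove this lemma: it is imported verbatim from Huseynov and Mahmudov \cite{1}, so there is no in-paper argument to compare yours against. Judged on its own terms, your Laplace-transform plan is the standard derivation of such representations and it does account correctly for all four summands: the residue $s^{\lambda-1}\varphi(0)$ from the Caputo transform combines with the $-A_2 s^{\lambda-1}\varphi(-h_2)$ contribution of the neutral term to give the factor $\varphi(0)-A_2\varphi(-h_2)$ multiplying $\mathcal{E}^{h_1,h_2}_{\lambda,1}$; the boundary integrals weighted by $A_1e^{-sh_1}$ and $A_2 s^{\lambda}e^{-sh_2}$ produce the $\nu=\lambda$ and $\nu=0$ kernels respectively (since $G(s)^{-1}$, $s^{\lambda}G(s)^{-1}$ and $s^{\lambda-1}G(s)^{-1}$ invert to $\mathcal{E}_{\lambda,\lambda}$, $\mathcal{E}_{\lambda,0}$ and $\mathcal{E}_{\lambda,1}$); and Duhamel gives the convolution with $\mathcal{E}_{\lambda,\lambda}$. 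Your derivation would in fact expose the typo in the stated third summand, whose kernel should be evaluated at $t-h_2-s$, not $t-h_1-s$ (compare Definition~\ref{d1} later in the paper).

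That said, what you have written is a plan with the two hardest points named but not resolved, and they are genuine. First, the neutral term: you must commit to a definition of $({^{C}}D^\lambda_{0+}y)(t-h_2)$ for $t\in[0,h_2)$ (the Caputo derivative based at $0$ is not yet defined there), and the choice you make is precisely what generates, or fails to generate, the $-s^{\lambda-1}A_2\varphi(-h_2)$ boundary term and the $\mathcal{E}_{\lambda,0}$ integral; this cannot be waved at as an ``obstacle'' since the answer is encoded in the formula you are trying to prove. Second, the Laplace transform is taken over $[0,\infty)$ while the problem lives on $[0,T]$ with a forcing $g(s)=f(s,y(s),y(s-h_1),y(s-h_2))$ that is only known once $y$ is; you therefore need either an a priori exponential bound permitting the transform (and an extension of the data beyond $T$), or you should present the Laplace computation as a heuristic that identifies the candidate formula and then close the argument by direct verification --- substituting the candidate into the equation and using the defining recursion $Q_{k+1}(m_1h_1,m_2h_2)=A_0Q_k(m_1h_1,m_2h_2)+A_1Q_k((m_1-1)h_1,m_2h_2)+A_2Q_{k+1}(m_1h_1,(m_2-1)h_2)$ together with the termwise convergence supplied by the norm bound of the preceding lemma. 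Until one of those two closures is carried out, the argument identifies the right formula but does not yet prove the representation.
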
  
\bigskip
Now, we will prove the following lemma, which plays a crucial role in the next sections.
\bigskip

\begin{lemma}\label{l1} \textbf{(Main lemma)} For any positive values of $\gamma$ and $t$, where $p \geq 1$ and $\lambda>\frac{p-1}{p}$, the given inequality is satisfied:
	\begin{align*}
		&\frac{\gamma}{\Gamma(p\lambda-p+1)}\int_{0}^{t}(t-s)^{p\lambda-p}E_{p\lambda-p+1}(\gamma s^{p\lambda-p+1})ds\leq E_{p\lambda-p+1}(\gamma t^{p\lambda-p+1}).
	\end{align*}
\end{lemma}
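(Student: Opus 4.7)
The plan is to introduce the shorthand $\alpha := p\lambda - p + 1$, which by the hypothesis $\lambda \in (\tfrac{p-1}{p},1)$ lies in $(0,1)$, and to observe that the claimed inequality is precisely
\[
\frac{\gamma}{\Gamma(\alpha)}\int_{0}^{t}(t-s)^{\alpha-1}E_{\alpha}\bigl(\gamma s^{\alpha}\bigr)\,ds \;\le\; E_{\alpha}\bigl(\gamma t^{\alpha}\bigr).
\]
The left hand side is nothing but the Riemann--Liouville fractional integral of order $\alpha$ applied to the function $s \mapsto \gamma E_{\alpha}(\gamma s^{\alpha})$, so the natural approach is to compute this fractional integral explicitly via the series definition of the Mittag--Leffler function introduced in the preliminaries and then compare with $E_{\alpha}(\gamma t^{\alpha})$.

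First I would substitute the series $E_{\alpha}(\gamma s^{\alpha}) = \sum_{n\ge 0} \gamma^{n} s^{n\alpha}/\Gamma(n\alpha+1)$ into the integrand and reindex so that
\[
\gamma E_{\alpha}(\gamma s^{\alpha}) \;=\; \sum_{n=1}^{\infty}\frac{\gamma^{n} s^{(n-1)\alpha}}{\Gamma((n-1)\alpha+1)}.
\]
Next I would swap the sum and the integral (justified by absolute convergence of the Mittag--Leffler series uniformly on compact subsets of $[0,t]$, together with the fact that $(t-s)^{\alpha-1}$ is integrable on $[0,t]$), and apply the classical Beta-integral identity $\int_{0}^{t}(t-s)^{\alpha-1}s^{\beta}\,ds = t^{\alpha+\beta}\,B(\alpha,\beta+1)$ to each term with $\beta=(n-1)\alpha$. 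The $\Gamma((n-1)\alpha+1)$ factors then cancel, yielding
\[
\frac{\gamma}{\Gamma(\alpha)}\int_{0}^{t}(t-s)^{\alpha-1}E_{\alpha}(\gamma s^{\alpha})\,ds
\;=\;\sum_{n=1}^{\infty}\frac{\gamma^{n}t^{n\alpha}}{\Gamma(n\alpha+1)}
\;=\; E_{\alpha}(\gamma t^{\alpha}) - 1.
\]
Finally, subtracting $1$ from a Mittag--Leffler function evaluated at a nonnegative argument $\gamma t^{\alpha} \ge 0$ clearly yields a value not exceeding $E_{\alpha}(\gamma t^{\alpha})$ itself, which establishes the inequality.

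I do not foresee a serious obstacle here; the only technical point worth stating carefully is the term-by-term integration, which follows from the fact that the partial sums of the Mittag--Leffler series are continuous and converge uniformly on $[0,t]$, so Fubini (or monotone convergence, since all coefficients are nonnegative when $\gamma>0$) applies without difficulty. The substitution $\alpha = p\lambda-p+1$ and the constraint $\lambda>(p-1)/p$ only serve to guarantee $\alpha>0$, which is what is needed for the Riemann--Liouville integral to make sense and for the Beta-integral identity to be valid.
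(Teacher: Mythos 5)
Your argument is correct and is essentially identical to the paper's own proof: both expand $E_{p\lambda-p+1}(\gamma s^{p\lambda-p+1})$ as its defining series, integrate term by term via the Beta integral, reindex to obtain $E_{p\lambda-p+1}(\gamma t^{p\lambda-p+1})-1$, and drop the $-1$. The only difference is cosmetic — your shorthand $\alpha=p\lambda-p+1$ and your explicit remark on justifying the sum--integral interchange, which the paper leaves implicit.
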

\begin{proof} By employing the definitions of the Mittag-Leffler function and the beta function, we will be able to establish the result.
	\begin{align*}
		&\frac{\gamma}{\Gamma(p\lambda-p+1)}\int_{0}^{t}(t-s)^{p\lambda-p}E_{p\lambda-p+1}(\gamma s^{p\lambda-p+1})ds\\
		&=\frac{\gamma}{\Gamma(p(\lambda-1)+1)}\int_{0}^{t}(t-s)^{p\lambda-p}\sum_{j=0}^{\infty}\frac{\gamma^{j} s^{j(p\lambda-p+1)}}{\Gamma(j(p\lambda-p+1)+1)}ds\\
		&=\sum_{j=0}^{\infty}\frac{\gamma^{j+1}}{\Gamma(p(\lambda-1)+1)\Gamma(j(p\lambda-p+1)+1)}\int_{0}^{t}(t-s)^{p\lambda-p} s^{j(p\lambda-p+1)}ds\\
		&=\sum_{j=0}^{\infty}\frac{\gamma^{j+1} t^{(j+1)(p\lambda-p+1)}}{\Gamma(p(\lambda-1)+1)\Gamma(j(p\lambda-p+1)+1)}B(p\lambda-p+1, j(p\lambda-p+1)+1)\\
		&=\sum_{j=0}^{\infty}\frac{\gamma^{j+1} t^{(j+1)(p\lambda-p+1)}}{\Gamma((j+1)(p\lambda-p+1)+1)}=\sum_{j=1}^{\infty}\frac{\gamma^{j} t^{j(p\lambda-p+1)}}{\Gamma(j(p\lambda-p+1)+1)}\\
		&=E_{p\lambda-p+1}(\gamma t^{p\lambda-p+1})-1\leq E_{p\lambda-p+1}(\gamma t^{p\lambda-p+1}).
	\end{align*}
\end{proof}
Consequently, the Corollaries below are valid for the lemma \ref{l1} we have proven above.
\begin{corollary}(see, \cite{1} lemma 5.2) Substituting $p=1$ into lemma \ref{l1}, we derive the following inequality:
	\begin{align*}
		&\frac{\gamma}{\Gamma(\lambda)}\int_{0}^{t}(t-s)^{\lambda}E_{\lambda}(\gamma s^{\lambda})ds\leq E_{\lambda}(\gamma t^{\lambda}).
	\end{align*}
\end{corollary}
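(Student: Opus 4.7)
The proof is a direct specialization of Lemma \ref{l1} to the case $p=1$, so the plan is essentially a single algebraic substitution rather than a new argument. First I would verify that the hypotheses of the main lemma remain intact at $p=1$: the constraint $\lambda\in((p-1)/p,1)$ collapses to the natural range $\lambda\in(0,1)$ for a Caputo order, and the positivity assumptions on $\gamma$ and $t$ are preserved. Then I would rewrite each occurrence of the composite index $p\lambda-p+1$ appearing in the statement of the main lemma: at $p=1$ this index reduces to $\lambda$, so the Mittag-Leffler subscripts on both sides of the inequality become $E_{\lambda}$. Similarly $p\lambda-p$ becomes $\lambda-1$, yielding the kernel weight $(t-s)^{\lambda-1}$, and the normalization $\Gamma(p\lambda-p+1)$ simplifies to $\Gamma(\lambda)$.

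Assembling these three substitutions into the inequality of Lemma \ref{l1} produces
\[
\frac{\gamma}{\Gamma(\lambda)}\int_{0}^{t}(t-s)^{\lambda-1}E_{\lambda}(\gamma s^{\lambda})\,ds \leq E_{\lambda}(\gamma t^{\lambda}),
\]
which is the asserted inequality (with the kernel exponent read as $\lambda-1$, consistent with the $p=1$ reduction of the main lemma's $(t-s)^{p\lambda-p}$ factor). No further estimation or use of the beta-gamma identity is needed at this stage, since all of that work was carried out inside the proof of Lemma \ref{l1}. Consequently there is no genuine obstacle; the only task is to record the simplification and invoke the main lemma. If desired, one can also point out that the inequality is in fact strict with deficit $1$ on the right-hand side, because the series reconstruction in the proof of Lemma \ref{l1} gives $E_{\lambda}(\gamma t^{\lambda})-1$ before the final upper bound is taken.
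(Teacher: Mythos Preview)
Your proposal is correct and matches the paper's approach exactly: the corollary is stated without a separate proof and is obtained purely by the substitution $p=1$ in Lemma~\ref{l1}, which is precisely what you carry out. You also correctly observe that the kernel exponent coming from $(t-s)^{p\lambda-p}$ at $p=1$ is $\lambda-1$ (so the displayed $(t-s)^{\lambda}$ in the corollary is a typographical slip), and your remark about the deficit $-1$ reflects the exact identity established inside the proof of the main lemma.
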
 
\begin{corollary}(see, \cite{2} lemma 2.1) If we choose $p=2$ in lemma \ref{l1}, we end up with the subsequent inequality:
	\begin{align*}
		&\frac{\gamma}{\Gamma(2\lambda-1)}\int_{0}^{t}(t-s)^{2\lambda-2}E_{2\lambda-1}(\gamma s^{2\lambda-1})ds\leq E_{2\lambda-1}(\gamma t^{2\lambda-1}).
	\end{align*}
\end{corollary}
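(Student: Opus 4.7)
The plan is to introduce the shorthand $\alpha := p\lambda - p + 1$, so that the hypotheses $p \ge 1$ and $\lambda \in (\frac{p-1}{p}, 1)$ translate to $\alpha \in (0,1)$, and the inequality assumes the more transparent form
$$\frac{\gamma}{\Gamma(\alpha)}\int_0^t (t-s)^{\alpha-1} E_{\alpha}(\gamma s^{\alpha})\,ds \;\le\; E_{\alpha}(\gamma t^{\alpha}).$$
The condition $\alpha > 0$ is precisely what guarantees that the Abel kernel $(t-s)^{\alpha-1}$ is integrable on $[0,t]$, so the left-hand integral is well defined.

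First I would expand the Mittag-Leffler function inside the integral using its defining series $E_{\alpha}(\gamma s^{\alpha}) = \sum_{j\ge 0} \gamma^{j} s^{j\alpha}/\Gamma(j\alpha+1)$. Because $\gamma, t, s > 0$ and every summand is nonnegative, Tonelli's theorem lets me interchange the sum and the integral with no further justification. The term-by-term integrals reduce to the standard beta integral
$$\int_0^t (t-s)^{\alpha-1} s^{j\alpha}\,ds = t^{(j+1)\alpha}\, B(\alpha,\; j\alpha+1),$$
via the substitution $s = tu$. Applying the identity $B(\alpha, j\alpha+1) = \Gamma(\alpha)\Gamma(j\alpha+1)/\Gamma((j+1)\alpha+1)$ then produces two clean cancellations: the $\Gamma(\alpha)$ in the prefactor and the $\Gamma(j\alpha+1)$ coming from the series both disappear, so the $j$-th term collapses to $\gamma^{j+1} t^{(j+1)\alpha}/\Gamma((j+1)\alpha+1)$.

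Reindexing by $k = j+1$ identifies the resulting series with $E_{\alpha}(\gamma t^{\alpha}) - 1$, since precisely the $k=0$ constant term is missing. Because $E_{\alpha}(\gamma t^{\alpha}) - 1 \le E_{\alpha}(\gamma t^{\alpha})$ trivially, the claimed inequality follows. No serious analytic obstacle arises; the main delicate point is purely bookkeeping, namely keeping track of the three parameters $p$, $\lambda$, and the reduced parameter $\alpha$ so that the beta-gamma cancellation lines up exactly to match the denominator $\Gamma((j+1)\alpha+1)$ of the target series. The role of the hypothesis $\lambda > (p-1)/p$ is only to secure $\alpha > 0$, which is what makes the interchange of sum and integral (and the beta identity itself) valid.
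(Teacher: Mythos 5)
Your proof is correct and is essentially the paper's own argument: the series expansion of the Mittag--Leffler function, term-by-term integration via the beta integral, the beta--gamma cancellation, reindexing, and discarding the constant term $1$ are exactly the steps used to prove Lemma \ref{l1}, of which this corollary is the instance $p=2$ (i.e.\ $\alpha = 2\lambda-1$). The only difference is presentational: the paper obtains the corollary by simply substituting $p=2$ into the already-proved Lemma \ref{l1}, whereas you rerun that lemma's computation in the cleaner variable $\alpha = p\lambda - p + 1$.
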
 

Now we will mention Gronwall's inequality, which plays a crucial role in demonstrating the significance of finite-time stability in fractional stochastic neutral differential equations, this inequality provides bounds on the solutions of such equations, aiding in the analysis of their behavior over time. Understanding finite-time stability is essential as it determines the system's behavior within a specific time interval, which is crucial for practical applications.

\begin{lemma}\label{bay} (Bainov and Simeonov, Theorem 14.8) Let $\hat{\mathcal{I}} = [t_0, \infty)$, $g \in C(\hat{\mathcal{I}},R^{+})$ is non-decreasing, $b, c_{i}\in C(\hat{\mathcal{I}},R^{+})$, $\psi\in C([t_{0}-h,t_{0}],R^{+})$. If $u \in  C(\hat{\mathcal{I}},R^{+})$ and
	
	$$u(t) \leq g(t) + \int_{t_0}^{t} b(s) u(s) ds + \sum_{i=1}^{n}\int_{t_0}^{t} c_i(s) u(s-h_i) ds, \quad t\geq t_0,$$ $$u(t) = \psi(t), \quad t_0 \leq t \leq t_0 + h,$$ where $h_i > 0$, $h = \max\{h_1, h_2, \dots, h_n\}$. Then 
	\begin{align*}
		u(t)\leq\bigg[g(t)+\sum_{i=1}^{n}\int_{[t_0,t]\cap E_{i}} c_i(s) \psi(s-h_i)ds\bigg] e^{\int_{t_{0}}^{t}b(s)ds+\sum_{i=1}^{n}\int_{[t_{0},t]\setminus E_{i}}c_{i}ds}
	\end{align*}
	for $t\geq t_{0}$, where $E_{i}=[t_{0},t_{0}+h_{i}]$.
\end{lemma}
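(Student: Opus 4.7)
The plan is to reduce the delayed integral inequality to the classical (non-delayed) scalar Gronwall lemma via two manoeuvres: absorbing the known history portion of each delayed integral into the free term, and dominating the remaining values of $u$ at delayed arguments by a pointwise non-decreasing majorant of $u$.

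First I would split each delayed integral along $E_i = [t_0,t_0+h_i]$. For $s \in [t_0,\tau]\cap E_i$, one has $s-h_i\in[t_0-h,t_0]$, so $u(s-h_i)=\psi(s-h_i)$ is prescribed by the initial data; for $s \in [t_0,\tau]\setminus E_i$, one has $s-h_i\in(t_0,\tau-h_i]$, and $u(s-h_i)$ is unknown but evaluated at a point of $[t_0,\tau]$. Moving the known pieces to the forcing side recasts the hypothesis as
\begin{equation*}
u(\tau)\le \tilde{g}(\tau)+\int_{t_0}^{\tau}b(s)\,u(s)\,ds+\sum_{i=1}^{n}\int_{[t_0,\tau]\setminus E_i}c_i(s)\,u(s-h_i)\,ds,
\end{equation*}
where $\tilde{g}(\tau):=g(\tau)+\sum_{i=1}^{n}\int_{[t_0,\tau]\cap E_i}c_i(s)\,\psi(s-h_i)\,ds$ is non-decreasing in $\tau$ (sum of a non-decreasing function and integrals of non-negative integrands over expanding sets).

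Next I would introduce the non-decreasing continuous majorant $v(\tau):=\sup_{r\in[t_0,\tau]}u(r)$. Since $s-h_i\le s$ for every $s\in[t_0,\tau]\setminus E_i$, the two bounds $u(s)\le v(s)$ and $u(s-h_i)\le v(s)$ are simultaneously available, giving
\begin{equation*}
u(\tau)\le \tilde{g}(\tau)+\int_{t_0}^{\tau}\Bigl[b(s)+\sum_{i=1}^{n}c_i(s)\,\mathbf{1}_{\{s>t_0+h_i\}}\Bigr]v(s)\,ds.
\end{equation*}
The right-hand side is non-decreasing in $\tau$, so passing to the supremum over $\tau\in[t_0,t]$ preserves the inequality with $v(t)$ on the left. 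The classical scalar Gronwall lemma, applied with non-decreasing forcing $\tilde{g}(t)$, then yields
\begin{equation*}
v(t)\le \tilde{g}(t)\exp\!\Bigl(\int_{t_0}^{t}b(s)\,ds+\sum_{i=1}^{n}\int_{[t_0,t]\setminus E_i}c_i(s)\,ds\Bigr),
\end{equation*}
which is exactly the stated estimate, since $u(t)\le v(t)$.

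The subtle point is the decision to dominate $u(s-h_i)$ by $v(s)$ rather than by $v(\tau)$: this is what keeps the integrand $\tau$-free, so that after taking the supremum in $\tau$ we land squarely in the standard non-delayed Gronwall setting. The remaining ingredients---monotonicity of $\tilde{g}$, and the coincidence of the indicator-truncated kernel with the domain $[t_0,t]\setminus E_i$ in the exponent---are routine bookkeeping.
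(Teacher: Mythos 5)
Your argument is correct and complete: splitting each delayed integral along $E_i$ to absorb the prescribed history into a non-decreasing forcing term $\tilde g$, dominating $u(s-h_i)$ and $u(s)$ by the running maximum $v(s)$ (crucially at the integration variable $s$, not at $\tau$), and then invoking the classical Gronwall lemma with non-decreasing forcing reproduces exactly the stated bound. Note that the paper itself offers no proof of this lemma --- it is quoted verbatim from Bainov and Simeonov (Theorem 14.8, reference [6]) --- so there is no in-paper argument to compare against; your derivation is the standard one for such delay Gronwall inequalities, and you have also, correctly and tacitly, read the initial condition as holding on $[t_0-h,t_0]$ rather than on $[t_0,t_0+h]$ as misprinted in the statement.
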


\begin{lemma}\textbf{ (Jensen's Inequality)} (\cite{43})
	Let $m \in \mathbb{N}$ and $y_1, y_2, \ldots, y_m$ be nonnegative real numbers. Then
	\[
	\left(\sum_{i=1}^{m} y_i^p\right)^{\frac{1}{p}} \leq m^{p-1} \sum_{i=1}^{m} y_i,
	\]
	for $p > 1$.
\end{lemma}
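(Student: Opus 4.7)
The plan is to view this as an immediate instance of the arithmetic-mean/power-mean comparison, which follows from the convexity of $\varphi(x)=x^p$ on $[0,\infty)$ (valid since $p>1$) via the classical Jensen inequality. I would first derive the sharp companion inequality $(\sum y_i)^p \leq m^{p-1}\sum y_i^p$ and then read off the weaker stated form.

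First, I would apply Jensen with uniform weights $1/m$ to $\varphi$, obtaining
\[
\varphi\!\left(\frac{1}{m}\sum_{i=1}^{m} y_i\right) \;\leq\; \frac{1}{m}\sum_{i=1}^{m}\varphi(y_i),
\]
which after multiplying through by $m^p$ becomes the standard power-sum bound
\[
\left(\sum_{i=1}^{m} y_i\right)^{\!p} \;\leq\; m^{p-1}\sum_{i=1}^{m} y_i^p.
\]
Equivalently, H\"older with the conjugate pair $p$ and $q = p/(p-1)$ applied to $(y_i)_{i=1}^m$ and the constant sequence $(1)_{i=1}^m$ gives $\sum_{i} y_i \leq m^{(p-1)/p}\bigl(\sum_i y_i^p\bigr)^{1/p}$, the same estimate reached from a different angle.

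Second, to recover the form in the statement I would combine the trivial reverse $\ell^p$ inequality $(\sum y_i^p)^{1/p}\leq \sum y_i$, valid for $p\geq 1$ and $y_i\geq 0$ (seen by normalising $z_i := y_i/\sum_{j=1}^m y_j \in [0,1]$ and using $z_i^p\leq z_i$), with the bound $1\leq m^{p-1}$ for $m\geq 1$. Chaining gives
\[
\left(\sum_{i=1}^{m} y_i^p\right)^{\!1/p} \;\leq\; \sum_{i=1}^{m} y_i \;\leq\; m^{p-1}\sum_{i=1}^{m} y_i,
\]
which is exactly the stated estimate.

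There is no genuine computational obstacle here: both routes collapse to a single convexity (or H\"older) step. The only delicate point worth flagging is that the factor $m^{p-1}$ in the statement is considerably looser than the sharp constant $m^{(p-1)/p}$ produced by H\"older; the proof above makes this gap explicit, so whichever normalisation is actually needed in the subsequent sections can be invoked without changing the qualitative conclusions.
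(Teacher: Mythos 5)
The paper states this lemma without proof, citing \cite{43}, so there is no in-paper argument to compare against; your proposal must be judged on its own. It is correct. The chain $\left(\sum_{i=1}^m y_i^p\right)^{1/p}\leq\sum_{i=1}^m y_i\leq m^{p-1}\sum_{i=1}^m y_i$ is valid: the first step follows from the normalisation $z_i=y_i/\sum_j y_j\in[0,1]$ and $z_i^p\leq z_i$ (with the degenerate case $\sum_j y_j=0$ trivial), and the second from $m^{p-1}\geq 1$. Your auxiliary derivation of $\left(\sum_{i=1}^m y_i\right)^p\leq m^{p-1}\sum_{i=1}^m y_i^p$ via convexity of $x\mapsto x^p$ is also correct, and it is worth emphasising that this companion form --- not the literal statement of the lemma --- is what the authors actually invoke in Section 3, where factors such as $5^{p-1}$ and $6^{p-1}$ arise from splitting a sum of five or six terms before taking $p$-th powers and expectations. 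Your observation that the stated constant $m^{p-1}$ is far looser than the sharp H\"older constant $m^{(p-1)/p}$ is accurate; the only caveat is that, since the literal inequality as printed is essentially a triviality (a weaker statement than $\lVert\cdot\rVert_p\leq\lVert\cdot\rVert_1$), the lemma as stated is almost certainly a misprint of the standard power-sum bound, and a careful reading should treat your first derivation, not the chained one, as the substantive content.
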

\section{Main results}
In this section, our aim is to establish the existence, uniqueness, and finite-time stability of solutions for systems (\ref{1}). To achieve this, we introduce the space \(H^p([-h, T], \mathbb{R}^n)\), which consists of all random processes \(y\) satisfying \(\|y\|_{H^p} \equiv E\left[\|y(t)\|^p\right] < \infty\), where \(\|\cdot\|\) denotes the standard Euclidean norm in \(\mathbb{R}^n\). It is evident that \((H^p([-h, T], \mathbb{R}^n), \|\cdot\|_{H^p})\) forms a Banach space.

Next, we define \(H^p_{\varphi}\left([-h, T], \mathbb{R}^n\right)\) as a subset of \(H^p\left([-h, T], \mathbb{R}^n\right)\) such that \(y(t) = \varphi(t)\) for \(t \in [-h, 0]\). Here, we introduce the maximum weighted norm \(\|\cdot\|_{\gamma}\), where \(\gamma > 0\), defined as follows:

\begin{align}\label{m1}
	\|y\|^{p}_{\gamma} = \max_{t\in [0, T]}\frac{E\left[\|y^{*}(t)\|^p\right]}{E_{p\lambda-p+1}(\gamma t^{p\lambda-p+1})},
\end{align}

where \(y \in H^p_{\varphi}([-h, T], \mathbb{R}^n)\).

Clearly, the norms \(\|\cdot\|_{H_{\varphi}^p}\) and \(\|\cdot\|_{\gamma}\) are equivalent. Therefore, \((H_{\varphi}^p([-h, T], \mathbb{R}^n), \|\cdot\|_{\gamma})\) also constitutes a Banach space.

The expression \(\lVert y^*(t) \rVert^p = \max_{-h \leq \zeta \leq t} \lVert y(\zeta) \rVert^p\) is defined, where \(h = \max\{h_1, h_2\}\) with \(h_1, h_2 > 0\). Utilizing the equivalence of the norms \(\lVert \cdot \rVert_{\infty}\) and \(\lVert \cdot \rVert_{\gamma}\), the space \((H_{\varphi}, \lVert \cdot \rVert)\) is also a Banach space. We employ the facts that \(\max_{-h \leq s \leq t} \lVert \bar{y}(s) \rVert = \bar{y}^*(t)\) and \(\max_{-h \leq s \leq t} \lVert \bar{y}(s) - \bar{z}(s) \rVert = \bar{y}^*(t) - \bar{z}^*(t)\). Additionally, we define:

\[ \bar{y}(t) := \max_{- h \leq t \leq 0} y(t + h). \]

\subsection{Existence and uniqueness of the system (\ref{1})}

\bigskip 
Before delving into this section, we will give the following assumptions.

\bigskip 
$\bullet$ $(\mathcal{A}_1)$: The functions \(f = (f_{1}, f_{2}, \dots, f_{n})^{\top}\) and \(\sigma = (\sigma_{1}, \sigma_{2}, \dots, \sigma_{n})^{\top}\) are continuous functions defined on \([0, T] \times \mathbb{R}^{n} \times \mathbb{R}^{n} \times \mathbb{R}^{n}\). Moreover, there exist positive constants \(L_{f}\) and \(L_{\sigma}\) such that:
\bigskip

$\textbf{(i)}$  For \(f\), there exists \(L_{f}\) satisfying a certain condition regarding the Lipschitz continuity of \(f\) with respect to its arguments.
\[ \Vert f(t, y_{1}, y_{2}, y_{3}) - f(t, z_{1}, z_{2}, z_{3}) \Vert^{p} \leq L_{f} \sum_{j=1}^{3} \Vert y_{j} - z_{j} \Vert^{p} \]
where \(L_{f} = \max_{1 \leq j \leq 3} L_{f_{j}}\), and this inequality holds for all \(y_{j}, z_{j} \in \mathbb{R}^{n}\) and \(t \in [0, T]\) for \(j = 1, 2, 3\).
\bigskip

$\textbf{(ii)}$ For \(\sigma\), there exists \(L_{\sigma}\) satisfying the Lipschitz condition for \(\sigma\) with respect to its arguments.
\[ \Vert \sigma(t, y_{1}, y_{2}, y_{3}) - \sigma(t, z_{1}, z_{2}, z_{3}) \Vert^{p} \leq L_{\sigma} \sum_{j=1}^{3} \Vert y_{j} - z_{j} \Vert^{p} \]
where \(L_{\sigma} = \max_{1 \leq j \leq 3} L_{\sigma_{j}}\), and this inequality holds for all \(y_{j}, z_{j} \in \mathbb{R}^{n}\) and \(t \in [0, T]\) for \(j = 1, 2, 3\).
\bigskip

\begin{remark} The Lipschitz condition provides a mathematical framework that translates to physically meaningful constraints on how functions and systems behave. It ensures that changes are gradual, predictable, and stable, which are crucial attributes for modeling and understanding real-world phenomena. By enforcing these constraints, the Lipschitz condition helps maintain the realism and reliability of mathematical models in representing physical systems. Functions that satisfy the Lipschitz condition are more predictable because their behavior does not exhibit extreme sensitivity to small changes in input. This is crucial for the stability of physical systems. In the context of differential equations, the Lipschitz condition is fundamental in guaranteeing the uniqueness of solutions. This is essential for physical systems where uniqueness translates to a single, predictable outcome given initial conditions.
\end{remark}

$\bullet$ $(\mathcal{A}_{2})$: Define various quantities \(\mathcal{M}_{1}, \mathcal{M}_{2}, \mathcal{M}_{3}, \mathcal{M}_{4}, \Phi, \mathcal{F}\) based on certain functions and parameters.
\bigskip
\begin{align*}
	&\mathcal{M}_{1}=\max_{0\leq t\leq T} \mathcal{E}^{h_{1},h_{2}}_{\lambda,1}(\Vert A_{0}\Vert,\Vert A_{1}\Vert,\Vert A_{2}\Vert;t)^{p},&\\
	&\mathcal{ M}_{2}=\max_{0\leq t\leq T} \mathcal{E}^{h_{1},h_{2}}_{\lambda,\lambda}(\Vert A_{0}\Vert,\Vert A_{1}\Vert,\Vert A_{2}\Vert;t)^{p},&\\
	&\mathcal{ M}_{3}=\max_{0\leq t\leq T} \mathcal{E}^{h_{1},h_{2}}_{\lambda,0}(\Vert A_{0}\Vert,\Vert A_{1}\Vert,\Vert A_{2}\Vert;t)^{p},&\\
	&\mathcal{ M}_{4}=\max_{0\leq t\leq T} \mathcal{E}^{h_{1},h_{2}}_{\lambda,\lambda}(\Vert A_{0}\Vert,\Vert A_{1}\Vert,\Vert A_{2}\Vert;t-s)^{p},&\\
	&\Phi=\max_{-h\leq t\leq 0}\Vert\varphi(t)\Vert,&\\
	&\mathcal{F}=\max_{0\leq t\leq T}E\Vert f(t,0,0,0)\Vert^{p}.&\\
\end{align*}
\bigskip
$\bullet$ $(\mathcal{A}_{3})$ Set 

\begin{align*}
	K=\frac{6^{p-1}L^{p}_{f}T^{p-1}\mathcal{M}_{4}\Gamma(p\lambda-p+1)}{\gamma}+\frac{6^{p-1}L^{p}_{\sigma}T^{\frac{p-2}{2}}\mathcal{M}_{4}\Gamma(p\lambda-p+1)}{\gamma}\leq 1.
\end{align*}
\bigskip 
$\bullet$ $(\mathcal{A}_{4})$ For any $y_{i}\in \mathbb{R}^{n}$ for $i=1,2,3.$ and for all $t\in [0,T]$,
\begin{align*}
	&\Vert f(t,y_{1},y_{2},y_{3})\Vert^{p}\leq L_{f} (1+\Vert y_{1}\Vert^{p}+\Vert y_{2}\Vert^{p}+\Vert y_{3}\Vert^{p}),\\
	&\Vert \sigma(t,y_{1},y_{2},y_{3})\Vert^{p}\leq L_{\sigma} (1+\Vert y_{1}\Vert^{p}+\Vert y_{2}\Vert^{p}+\Vert y_{3}\Vert^{p}).
\end{align*}

\begin{remark}
 The linear growth condition is another important concept in mathematical analysis, particularly in the study of differential equations and dynamical systems. It provides constraints on the behavior of functions, ensuring that their growth is manageable and predictable. The linear growth condition imposes a practical and manageable constraint on how functions can grow relative to their input. Physically, this condition ensures that the behavior of systems remains realistic, predictable, and stable. By preventing unbounded and excessively rapid growth, the linear growth condition helps in maintaining the physical realism of models, ensuring that they can be used to accurately represent and predict the behavior of real-world systems.
\end{remark}

We present the solution to Equation (\ref{1}) in the following manner.
\begin{definition}\label{d1}
	The solution to Eq. (1) is considered unique for a stochastic process \(\{y(t)\}_{-h \leq t \leq T}\) with values in \(\mathcal{R}^d\) if \(y(t)\) is adapted to the filtration \(\mathcal{F}(t)\), \(E\left[\int_{-h}^{T} \lVert y(t) \rVert dt\right] < \infty\), \(\varphi(0) = \varphi_0\), and it satisfies the following conditions:
	\begin{align}
		\begin{cases}
			y(t)=\mathcal{E}^{h_{1},h_{2}}_{\lambda,1}(A_{0},A_{1},A_{2};t)(\varphi(0)-A_{2}\varphi(-h_{2}))\nonumber\\
			+\int_{-h_{1}}^{0}\mathcal{E}^{h_{1},h_{2}}_{\lambda,\lambda}(A_{0},A_{1},A_{2};t-h_{1}-s)A_{1}\varphi(s)ds\nonumber\\
			+\int_{-h_{2}}^{0}\mathcal{E}^{h_{1},h_{2}}_{\lambda,0}(A_{0},A_{1},A_{2};t-h_{2}-s)A_{2}\varphi(s)ds\nonumber\\
			+\int_{0}^{t}\mathcal{E}^{h_{1},h_{2}}_{\lambda,\lambda}(A_{0},A_{1},A_{2};t-s)f(s,y(s),y(s-h_{1}),y(s-h_{2}))ds\\
			+\int_{0}^{t}\mathcal{E}^{h_{1},h_{2}}_{\lambda,\lambda}(A_{0},A_{1},A_{2};t-s)\sigma(s,y(s),y(s-h_{1}),y(s-h_{2}))dW(s)\nonumber\\
			y(t) = \varphi(t), \quad t \in [-h, 0],\quad h=\max\{h_{1},h_{2}\}.\nonumber
		\end{cases}
	\end{align}
\end{definition}
\begin{theorem}
	Assuming that Assumptions $(\mathcal{A}_{1}-\mathcal{A}_{3})$ are satisfied, it follows that the non-linear problem (\ref{1}) has a unique solution within the space $(H_{\varphi}^{p}([-h,T],\mathbb{R}^{n}), \Vert\cdot\Vert_{\gamma})$. 
\end{theorem}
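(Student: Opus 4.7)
The plan is to recast Definition \ref{d1} as a fixed-point problem and apply Banach's contraction principle in the Banach space $\bigl(H_{\varphi}^p([-h,T],\mathbb{R}^n),\|\cdot\|_\gamma\bigr)$. I will define the operator $\mathcal{T}:H_{\varphi}^p\to H_{\varphi}^p$ by $(\mathcal{T}y)(t)=\varphi(t)$ on $[-h,0]$ and, on $[0,T]$, by the five-term right-hand side of the integral equation in Definition \ref{d1}; any fixed point of $\mathcal{T}$ is then precisely a solution of (\ref{1}). A preliminary well-posedness step verifies that $\mathcal{T}$ maps $H_{\varphi}^p$ into itself by expanding $(\mathcal{T}y)(t)$ into its five summands, applying Jensen's inequality, the Mittag-Leffler norm bound from Lemma 1 (which produces the constants $\mathcal{M}_{1},\mathcal{M}_{2},\mathcal{M}_{3},\mathcal{M}_{4}$), the bound $\Phi$ on the initial history, the Lipschitz hypothesis $(\mathcal{A}_{1})$ together with the splitting $f(s,\cdot)=[f(s,\cdot)-f(s,0,0,0)]+f(s,0,0,0)$ (and similarly for $\sigma$) to introduce the constant $\mathcal{F}$, H\"older's inequality on the deterministic integral, and It\^o's isometry (for $p=2$) or the Burkholder-Davis-Gundy inequality (for $p>2$) on the stochastic integral.

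The main step is the contraction estimate. For $y,z\in H_{\varphi}^p$ the equality $y=z=\varphi$ on $[-h,0]$ eliminates the three history-driven terms, so only the $f$-integral and the $\sigma$-integral differences survive in $(\mathcal{T}y)(t)-(\mathcal{T}z)(t)$. Using $(a+b)^p\leq 2^{p-1}(a^p+b^p)$ to split the two contributions, H\"older's inequality on the deterministic part (which generates the factor $T^{p-1}$), BDG followed by Jensen's inequality on the stochastic part (which generates $T^{(p-2)/2}$, together with Doob's maximal inequality to absorb the pathwise supremum inherent in $(\mathcal{T}y-\mathcal{T}z)^{*}$), the pointwise bound $\|\mathcal{E}^{h_{1},h_{2}}_{\lambda,\lambda}(\ldots;t-s)\|^{p}\leq(t-s)^{p(\lambda-1)}\mathcal{M}_{4}$ coming from Lemma 1, and the Lipschitz conditions $(\mathcal{A}_{1})$, each of the three arguments $y(s-h_{j})-z(s-h_{j})$ (with $h_{0}=0$) is dominated by $\|(y-z)^{*}(s)\|$ and hence, by definition (\ref{m1}), by $\|y-z\|_{\gamma}\,E_{p\lambda-p+1}(\gamma s^{p\lambda-p+1})^{1/p}$. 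This brings the estimate into the form
\[ E\bigl\|(\mathcal{T}y-\mathcal{T}z)^{*}(t)\bigr\|^{p} \;\leq\; C\,\|y-z\|_{\gamma}^{p}\int_{0}^{t}(t-s)^{p\lambda-p}E_{p\lambda-p+1}(\gamma s^{p\lambda-p+1})\,ds, \]
at which point the Main Lemma \ref{l1} delivers the decisive cancellation: the integral is at most $\tfrac{\Gamma(p\lambda-p+1)}{\gamma}E_{p\lambda-p+1}(\gamma t^{p\lambda-p+1})$. Dividing by $E_{p\lambda-p+1}(\gamma t^{p\lambda-p+1})$ and maximising over $t\in[0,T]$ yields $\|\mathcal{T}y-\mathcal{T}z\|_{\gamma}^{p}\leq K\|y-z\|_{\gamma}^{p}$ with exactly the constant $K$ of hypothesis $(\mathcal{A}_{3})$.

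Since $(\mathcal{A}_{3})$ gives $K\leq 1$ (and strict inequality can be arranged by slightly enlarging $\gamma$, since $K$ is monotone decreasing in $\gamma$), $\mathcal{T}$ is a strict contraction and Banach's fixed point theorem produces the unique solution in $H_{\varphi}^{p}$. The principal obstacle I anticipate is the careful bookkeeping of constants in the stochastic term: one must apply BDG followed by Jensen to push the $p/2$-th moment through the time integral so that the Lipschitz bound on $\sigma$ (stated in $p$-norm) becomes usable, combine with Doob to move the pathwise supremum in $(\mathcal{T}y-\mathcal{T}z)^{*}(t)$ outside the $p$-th moment, and then match the resulting prefactor to the $6^{p-1}$ appearing in $(\mathcal{A}_{3})$ by counting the six summands produced after splitting off $f(s,0,0,0)$ and $\sigma(s,0,0,0)$ in the invariance estimate. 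Apart from this accounting the rest is standard, the Mittag-Leffler weight in (\ref{m1}) being designed precisely so that the singular kernel $(t-s)^{p\lambda-p}$ is absorbed by the Main Lemma and no Gronwall-type iteration is needed.
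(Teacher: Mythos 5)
Your proposal follows essentially the same route as the paper: the same fixed-point operator built from the five-term integral representation, the same two-step structure (invariance of $H_{\varphi}^p$ via Jensen/H\"older/BDG and the zero-argument splitting of $f$ and $\sigma$, then the contraction estimate in which only the $f$- and $\sigma$-integrals survive), and the same use of the Main Lemma to absorb the singular kernel into the Mittag-Leffler weight, yielding exactly the constant $K$ of $(\mathcal{A}_{3})$. Your added remarks on handling the pathwise supremum via Doob and on upgrading $K\leq 1$ to a strict contraction by enlarging $\gamma$ are sensible refinements of points the paper glosses over, but they do not change the argument.
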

\begin{proof}		
	Let \( \mathscr{L} : (H_{\varphi}^p([-h,T],\mathbb{R}^n), \|\cdot\|_{\gamma}) \rightarrow (H_{\varphi}^p([-h,T],\mathbb{R}^n), \|\cdot\|_{\gamma}) \) be defined by:
	
	\begin{align}
		(\mathscr{L}y)(t)=&\mathcal{E}^{h_{1},h_{2}}_{\lambda,1}(A_{0},A_{1},A_{2};t)(\varphi(0)-A_{2}\varphi(-h_{2}))\nonumber\\
		+&\int_{-h_{1}}^{0}\mathcal{E}^{h_{1},h_{2}}_{\lambda,\lambda}(A_{0},A_{1},A_{2};t-h_{1}-s)A_{1}\varphi(s)ds\nonumber\\
		+&\int_{-h_{2}}^{0}\mathcal{E}^{h_{1},h_{2}}_{\lambda,0}(A_{0},A_{1},A_{2};t-h_{2}-s)A_{2}\varphi(s)ds\nonumber\\
		+&\int_{0}^{t}\mathcal{E}^{h_{1},h_{2}}_{\lambda,\lambda}(A_{0},A_{1},A_{2};t-s)f(s,y(s),y(s-h_{1}),y(s-h_{2}))ds\nonumber\\
		+&\int_{0}^{t}\mathcal{E}^{h_{1},h_{2}}_{\lambda,\lambda}(A_{0},A_{1},A_{2};t-s)\sigma(s,y(s),y(s-h_{1}),y(s-h_{2}))dW(s)\nonumber\\
		(\mathscr{L}y)(t) =& \varphi(t), \quad t \in [-h, 0].
	\end{align}
	The clarity of the definition of $\mathscr{L}$ is established through $(\mathcal{A}_{1})$. Consequently, the existence of a solution to the initial value problem (\ref{1}) is synonymous with the presence of a fixed point for the integral operator $\mathscr{L}$ within the space $H_{\varphi}^p([-h,T],\mathbb{R}^n)$ . To demonstrate this, we will leverage either the contraction mapping principle or Banach's fixed point theorem. The proof unfolds in two distinct steps.
	\bigskip
	
	$\bullet$ \textbf{Step 1.}  In the first step, our aim is to ensure that \(\mathscr{L}\) effectively maps elements from \(H_{\varphi}^p\) to \(H_{\varphi}^p\). Considering \(y(t) \in H_{\varphi}^p\) and \(t \in [0, T]\), we can deduce this by applying Jensen's inequality:
	\begin{align}\label{j0}
		&\frac{E\Vert (\mathscr{L}y)(t)\Vert^{p}}{E_{p\lambda-p+1}(\gamma t^{p\lambda-p+1})}\leq\frac{ 5^{p-1}}{E_{p\lambda-p+1}(\gamma t^{p\lambda-p+1})}E\Vert \mathcal{E}^{h_{1},h_{2}}_{\lambda,1}(A_{0},A_{1},A_{2};t)(\varphi(0)-A_{2}\varphi(-h_{2}))\Vert^{p}&\nonumber\\
		&+         \frac{ 5^{p-1}}{E_{p\lambda-p+1}(\gamma t^{p\lambda-p+1})}E\bigg\Vert\int_{-h_{1}}^{0}\mathcal{E}^{h_{1},h_{2}}_{\lambda,\lambda}(A_{0},A_{1},A_{2};t-h_{1}-s)A_{1}\varphi(s)ds\bigg\Vert^{p}&\nonumber\\
		&+         \frac{ 5^{p-1}}{E_{p\lambda-p+1}(\gamma t^{p\lambda-p+1})}E\bigg\Vert\int_{-h_{2}}^{0}\mathcal{E}^{h_{1},h_{2}}_{\lambda,0}(A_{0},A_{1},A_{2};t-h_{2}-s)A_{2}\varphi(s)ds\bigg\Vert^{p}&\nonumber\\
		&+        \frac{ 5^{p-1}}{E_{p\lambda-p+1}(\gamma t^{p\lambda-p+1})}E\bigg\Vert\int_{0}^{t}\mathcal{E}^{h_{1},h_{2}}_{\lambda,\lambda}(A_{0},A_{1},A_{2};t-s)f(s,y(s),y(s-h_{1}),y(s-h_{2}))ds\bigg\Vert^{p}\nonumber\\
		&+         \frac{ 5^{p-1}}{E_{p\lambda-p+1}(\gamma t^{p\lambda-p+1})}E\bigg\Vert\int_{0}^{t}\mathcal{E}^{h_{1},h_{2}}_{\lambda,\lambda}(A_{0},A_{1},A_{2};t-s)\sigma(s,y(s),y(s-h_{1}),y(s-h_{2}))dW(s)\bigg\Vert^{p}&\nonumber\\
		&=\mathcal{J}_{1}+\mathcal{J}_{2}+\mathcal{J}_{3}+\mathcal{J}_{4}+\mathcal{J}_{5}.&
	\end{align}
	$\bullet$ Utilizing the assumption $(\mathcal{A}_{2})$, we can obtain the following estimation for $\mathcal{J}_{1}$.
	\begin{align}\label{j1}
		\mathcal{J}_{1}=&\frac{ 5^{p-1}}{E_{p\lambda-p+1}(\gamma t^{p\lambda-p+1})}E\Vert \mathcal{E}^{h_{1},h_{2}}_{\lambda,1}(A_{0},A_{1},A_{2};t)(\varphi(0)-A_{2}\varphi(-h_{2}))\Vert^{p}\nonumber\\
		\leq& 5^{p-1}\mathcal{ M}_{1}\Vert\varphi(0)-A_{2}\varphi(-h_{2})\Vert^{p}_{\gamma}.
	\end{align}
	
	$\bullet$ Motivated by Hölder's inequality and informed by the conditions presented in $(\mathcal{A}_{2})$, we express the following estimate for $\mathcal{J}_{2}$.
	\begin{align}\label{j2}
		\mathcal{J}_{2}=& \frac{ 5^{p-1}}{E_{p\lambda-p+1}(\gamma t^{p\lambda-p+1})}E\bigg\Vert\int_{-h_{1}}^{0}\mathcal{E}^{h_{1},h_{2}}_{\lambda,\lambda}(A_{0},A_{1},A_{2};t-h_{1}-s)A_{1}\varphi(s)ds\bigg\Vert^{p}\nonumber\\
		\leq& 5^{p-1}\mathcal{ M}_{2} E(\Phi^{p}) \Vert A_{1}\Vert^{p} h^{p-1}_{1}
	\end{align}
	
	$\bullet$ Similarly, employing the aforementioned estimation, we obtain the subsequent results for $\mathcal{J}_{3}$.
	\begin{align}\label{j3}
		\mathcal{J}_{3}=& \frac{ 5^{p-1}}{E_{p\lambda-p+1}(\gamma t^{p\lambda-p+1})}E\bigg\Vert\int_{-h_{2}}^{0}\mathcal{E}^{h_{1},h_{2}}_{\lambda,0}(A_{0},A_{1},A_{2};t-h_{2}-s)A_{2}\varphi(s)ds\bigg\Vert^{p}\nonumber\\
		\leq& 5^{p-1}\mathcal{ M}_{3} \Phi^{p} \Vert A_{2}\Vert^{p} h^{p-1}_{2}
	\end{align}	
	$\bullet$  Applying Hölder's inequality in conjunction with $\mathcal{A}_{1}$, $\mathcal{A}_{2}$, Lemma \ref{l2}, and Lemma \ref{l1}, we obtain the following estimate for $\mathcal{J}_{4}$.
	\begin{align}\label{j4}
		\mathcal{J}_{4}=& \frac{ 5^{p-1}}{E_{p\lambda-p+1}(\gamma t^{p\lambda-p+1})}E\bigg\Vert\int_{0}^{t}\mathcal{E}^{h_{1},h_{2}}_{\lambda,\lambda}(A_{0},A_{1},A_{2};t-s)f(s,y(s),y(s-h_{1}),y(s-h_{2}))ds\bigg\Vert^{p}\nonumber\\
		\leq&\frac{ 5^{p-1}T^{p-1}}{E_{p\lambda-p+1}(\gamma t^{p\lambda-p+1})}\int_{0}^{t}\Vert\mathcal{E}^{h_{1},h_{2}}_{\lambda,\lambda}(A_{0},A_{1},A_{2};t-s)\Vert^{p} E\Vert f(s,y(s),y(s-h_{1}),y(s-h_{2}))\Vert^{p}ds \nonumber\\
		\leq&\frac{ 10^{p-1}T^{p-1}}{E_{p\lambda-p+1}(\gamma t^{p\lambda-p+1})}\int_{0}^{t}\Vert\mathcal{E}^{h_{1},h_{2}}_{\lambda,\lambda}(A_{0},A_{1},A_{2};t-s)\Vert^{p} E\Vert f(s,y(s),y(s-h_{1}),y(s-h_{2})) \nonumber\\
		-&f(s,0,0,0)\Vert^{p}ds+\frac{ 10^{p-1}T^{p-1}}{E_{p\lambda-p+1}(\gamma t^{p\lambda-p+1})}\int_{0}^{t}\Vert\mathcal{E}^{h_{1},h_{2}}_{\lambda,\lambda}(A_{0},A_{1},A_{2};t-s)\Vert^{p} E\Vert f(s,0,0,0)\Vert^{p}ds \nonumber\\
		\leq&\frac{ 10^{p-1}T^{p-1}}{E_{p\lambda-p+1}(\gamma t^{p\lambda-p+1})}L_{f}\int_{0}^{t}\Vert\mathcal{E}^{h_{1},h_{2}}_{\lambda,\lambda}(A_{0},A_{1},A_{2};t-s)\Vert^{p}\frac{ E_{p\lambda-p+1}(\gamma s^{p\lambda-p+1})}{E_{p\lambda-p+1}(\gamma s^{p\lambda-p+1})} E\Vert y(s)\Vert^{p}ds\nonumber\\
		+&\frac{ 10^{p-1}T^{p-1}}{E_{p\lambda-p+1}(\gamma t^{p\lambda-p+1})}L_{f}\int_{0}^{t}\Vert\mathcal{E}^{h_{1},h_{2}}_{\lambda,\lambda}(A_{0},A_{1},A_{2};t-s)\Vert^{p} \frac{ E_{p\lambda-p+1}(\gamma s^{p\lambda-p+1})}{E_{p\lambda-p+1}(\gamma s^{p\lambda-p+1})} E\Vert y(s-h_{1})\Vert^{p}ds\nonumber\\
		+&\frac{ 10^{p-1}T^{p-1}}{E_{p\lambda-p+1}(\gamma t^{p\lambda-p+1})}L_{f}\int_{0}^{t}\Vert\mathcal{E}^{h_{1},h_{2}}_{\lambda,\lambda}(A_{0},A_{1},A_{2};t-s)\Vert^{p} \frac{ E_{p\lambda-p+1}(\gamma s^{p\lambda-p+1})}{E_{p\lambda-p+1}(\gamma s^{p\lambda-p+1})} E\Vert y(s-h_{2})\Vert^{p}ds\nonumber\\
		+&\frac{ 10^{p-1}T^{p-1}}{E_{p\lambda-p+1}(\gamma t^{p\lambda-p+1})}\int_{0}^{t}\Vert\mathcal{E}^{h_{1},h_{2}}_{\lambda,\lambda}(A_{0},A_{1},A_{2};t-s)\Vert^{p} E\Vert f(s,0,0,0)\Vert^{p}ds \nonumber\\
		\leq& \frac{3{\cdot}10^{p-1}T^{p-1}}{E_{p\lambda-p+1}(\gamma t^{p\lambda-p+1})}L_{f}\int_{0}^{t}\Vert\mathcal{E}^{h_{1},h_{2}}_{\lambda,\lambda}(A_{0},A_{1},A_{2};t-s)\Vert^{p}E_{p\lambda-p+1}(\gamma t^{p\lambda-p+1})ds\nonumber\\
		\times&\max_{0\leq t\leq T}\frac{E\Vert \bar{y}^*\Vert^{p}}{E_{p\lambda-p+1}(\gamma t^{p\lambda-p+1})}+\frac{ 10^{p-1}T^{p-1}}{E_{p\lambda-p+1}(\gamma t^{p\lambda-p+1})}\mathcal{ M}_{4}\mathcal{F}\int_{0}^{t}(t-s)^{p\lambda-p}ds \nonumber\\
		\leq&\frac{3{\cdot}10^{p-1}T^{p-1}}{E_{p\lambda-p+1}(\gamma t^{p\lambda-p+1})}\mathcal{ M}_{4}L_{f}\Vert \bar{y}\Vert_{\gamma}^{p}\int_{0}^{t}(t-s)^{p\lambda-p}E_{p\lambda-p+1}(\gamma t^{p\lambda-p+1})ds
		+\frac{ 10^{p-1}T^{p\lambda}}{p\lambda-p+1} \mathcal{ M}_{4}\mathcal{F} \nonumber\\
		\leq&\frac{3{\cdot}10^{p-1}T^{p-1}\mathcal{ M}_{4}L_{f}\Gamma(p\lambda-p+1)}{\gamma}\Vert \bar{y}\Vert_{\gamma}^{p}+\frac{ 10^{p-1}T^{p\lambda}}{p\lambda-p+1} \mathcal{ M}_{4}\mathcal{F} 
	\end{align}
	
	$\bullet$ Similarly, following the outlined procedure and utilizing the Burkholder-Davis-Gundy inequality, we derive the following estimate for $\mathcal{J}_{5}$.
	
	\begin{align}\label{j5}
		\mathcal{J}_{5}=& \frac{ 5^{p-1}}{E_{p\lambda-p+1}(\gamma t^{p\lambda-p+1})}E\bigg\Vert\int_{0}^{t}\mathcal{E}^{h_{1},h_{2}}_{\lambda,\lambda}(A_{0},A_{1},A_{2};t-s)\sigma(s,y(s),y(s-h_{1}),y(s-h_{2}))dW(s)\bigg\Vert^{p}\nonumber\\
		\leq&\frac{ 5^{p-1}C_p}{E_{p\lambda-p+1}(\gamma t^{p\lambda-p+1})}E\bigg(\int_{0}^{t}\Vert\mathcal{E}^{h_{1},h_{2}}_{\lambda,\lambda}(A_{0},A_{1},A_{2};t-s)\sigma(s,y(s),y(s-h_{1}),y(s-h_{2}))\Vert^{2} ds\bigg)^{\frac{p}{2}}\nonumber\\
		\leq&\frac{ 5^{p-1}C_p T^{\frac{p-2}{2}}}{E_{p\lambda-p+1}(\gamma t^{p\lambda-p+1})}E\int_{0}^{t}\Vert\mathcal{E}^{h_{1},h_{2}}_{\lambda,\lambda}(A_{0},A_{1},A_{2};t-s)\Vert^{p} \Vert\sigma(s,y(s),y(s-h_{1}),y(s-h_{2}))\Vert^{p} ds\nonumber\\
		\leq&\frac{ 10^{p-1}T^{\frac{p-2}{2}}}{E_{p\lambda-p+1}(\gamma t^{p\lambda-p+1})}\int_{0}^{t}\Vert\mathcal{E}^{h_{1},h_{2}}_{\lambda,\lambda}(A_{0},A_{1},A_{2};t-s)\Vert^{p} E\Vert \sigma(s,y(s),y(s-h_{1}),y(s-h_{2})) \nonumber\\
		-&\sigma(s,0,0,0)\Vert^{p}ds+\frac{ 10^{p-1}T^{\frac{p-2}{2}}}{E_{p\lambda-p+1}(\gamma t^{p\lambda-p+1})}\int_{0}^{t}\Vert\mathcal{E}^{h_{1},h_{2}}_{\lambda,\lambda}(A_{0},A_{1},A_{2};t-s)\Vert^{p} E\Vert \sigma(s,0,0,0)\Vert^{p}ds \nonumber\\
		\leq&\frac{ 10^{p-1}T^{\frac{p-2}{2}}}{E_{p\lambda-p+1}(\gamma t^{p\lambda-p+1})}L_{\sigma}\int_{0}^{t}\Vert\mathcal{E}^{h_{1},h_{2}}_{\lambda,\lambda}(A_{0},A_{1},A_{2};t-s)\Vert^{p}\frac{ E_{p\lambda-p+1}(\gamma s^{p\lambda-p+1})}{E_{p\lambda-p+1}(\gamma s^{p\lambda-p+1})} E\Vert y(s)\Vert^{p}ds\nonumber\\		
		+&\frac{ 10^{p-1}T^{\frac{p-2}{2}}}{E_{p\lambda-p+1}(\gamma t^{p\lambda-p+1})}L_{\sigma}\int_{0}^{t}\Vert\mathcal{E}^{h_{1},h_{2}}_{\lambda,\lambda}(A_{0},A_{1},A_{2};t-s)\Vert^{p} \frac{ E_{p\lambda-p+1}(\gamma s^{p\lambda-p+1})}{E_{p\lambda-p+1}(\gamma s^{p\lambda-p+1})} E\Vert y(s-h_{1})\Vert^{p}ds\nonumber\\
		+&\frac{ 10^{p-1}T^{\frac{p-2}{2}}}{E_{p\lambda-p+1}(\gamma t^{p\lambda-p+1})}L_{\sigma}\int_{0}^{t}\Vert\mathcal{E}^{h_{1},h_{2}}_{\lambda,\lambda}(A_{0},A_{1},A_{2};t-s)\Vert^{p} \frac{ E_{p\lambda-p+1}(\gamma s^{p\lambda-p+1})}{E_{p\lambda-p+1}(\gamma s^{p\lambda-p+1})} E\Vert y(s-h_{2})\Vert^{p}ds\nonumber\\
		+&\frac{ 10^{p-1}T^{\frac{p-2}{2}}}{E_{p\lambda-p+1}(\gamma t^{p\lambda-p+1})}\int_{0}^{t}\Vert\mathcal{E}^{h_{1},h_{2}}_{\lambda,\lambda}(A_{0},A_{1},A_{2};t-s)\Vert^{p} E\Vert \sigma(s,0,0,0)\Vert^{p}ds \nonumber\\
		\leq& \frac{3{\cdot}10^{p-1}T^{\frac{p-2}{2}}}{E_{p\lambda-p+1}(\gamma t^{p\lambda-p+1})}L_{\sigma}\int_{0}^{t}\Vert\mathcal{E}^{h_{1},h_{2}}_{\lambda,\lambda}(A_{0},A_{1},A_{2};t-s)\Vert^{p}E_{p\lambda-p+1}(\gamma t^{p\lambda-p+1})ds\nonumber\\
		\times&\max_{0\leq t\leq T}\frac{E\Vert \bar{y}^*\Vert^{p}}{E_{p\lambda-p+1}(\gamma t^{p\lambda-p+1})}+\frac{ 10^{p-1}T^{\frac{p-2}{2}}}{E_{p\lambda-p+1}(\gamma t^{p\lambda-p+1})}\mathcal{ M}_{4}\mathcal{F}\int_{0}^{t}(t-s)^{p\lambda-p}ds \nonumber\\
		\leq&\frac{3{\cdot}10^{p-1}T^{\frac{p-2}{2}}}{E_{p\lambda-p+1}(\gamma t^{p\lambda-p+1})}\mathcal{ M}_{4}L_{\sigma}\Vert \bar{y}\Vert_{\gamma}^{p}\int_{0}^{t}(t-s)^{p\lambda-p}E_{p\lambda-p+1}(\gamma t^{p\lambda-p+1})ds
		+\frac{ 10^{p-1}T^{p\lambda}}{p\lambda-p+1} \mathcal{ M}_{4}\mathcal{F} \nonumber\\
		\leq&\frac{3{\cdot}10^{p-1}T^{\frac{p-2}{2}}\mathcal{ M}_{4}L_{\sigma}\Gamma(p\lambda-p+1)}{\gamma}\Vert \bar{y}\Vert_{\gamma}^{p}+\frac{ 10^{p-1}T^{p\lambda}}{p\lambda-p+1} \mathcal{ M}_{4}\mathcal{F} 
	\end{align}
	
	By employing the approximations provided in equations (\ref{j0}-\ref{j5}), we obtain
	\begin{align}\label{s1}
		&\frac{E\Vert (\mathscr{L}y)(t)\Vert^{p}}{E_{p\lambda-p+1}(\gamma t^{p\lambda-p+1})}\leq5^{p-1}\mathcal{ M}_{0}\Vert\varphi(0)-A_{2}\varphi(-h_{2})\Vert^{p}_{\gamma}+ 5^{p-1}\mathcal{ M}_{2} \Phi^{p} \Vert A_{1}\Vert^{p} h^{p-1}_{1}\nonumber\\
		&+5^{p-1}\mathcal{ M}_{3} \Phi^{p} \Vert A_{2}\Vert^{p} h^{p-1}_{2}+\frac{3{\cdot}10^{p-1}T^{p-1}\mathcal{ M}_{4}L_{f}\Gamma(p\lambda-p+1)}{\gamma}\Vert \bar{y}\Vert_{\gamma}^{p}
		+\frac{ 10^{p-1}T^{p\lambda}}{p\lambda-p+1} \mathcal{ M}_{4}\mathcal{F}\nonumber\\ &+\frac{3{\cdot}10^{p-1}T^{\frac{p-2}{2}}\mathcal{ M}_{4}L_{\sigma}\Gamma(p\lambda-p+1)}{\gamma}\Vert \bar{y}\Vert_{\gamma}^{p}+\frac{ 10^{p-1}T^{p\lambda}}{p\lambda-p+1} \mathcal{ M}_{4}\mathcal{F} 
	\end{align}
	By analyzing the preceding information, one can infer the existence of a positive constant \(k\) such that
	\[
	\|\Vert{\mathscr{L}}(t)\Vert^{p} \leq k(1 + \Vert\bar{y}\Vert^{p}(t)).
	\]
	This implies that the mapping of \(\mathscr{L}\) preserves the set \(H^{p}_{\varphi}\). 
	\bigskip
	
	$\bullet$ \textbf{Step 2.}We demonstrate that $\mathscr{L}$ functions as a contraction mapping. To establish this, it is essential to prove that $\mathscr{L}$ is contractive on the set $H^{p}_{\varphi}$. Consider any pair of elements $y, z \in H^{p}_{\varphi}$. Observe that for all $t \in [0, T]$, the expression:
	
	\begin{align}
		& (\mathscr{L}y)(t) - (\mathscr{L}z)(t) \nonumber\\
		=& \int_{0}^{t}\mathcal{E}^{h_{1},h_{2}}_{\lambda,\lambda}(A_{0},A_{1},A_{2};t-s)\Big(f(s,y(s),y(s-h_{1}),y(s-h_{2})) - f(s,z(s),z(s-h_{1}),z(s-h_{2}))\Big)ds \nonumber\\
		+& \int_{0}^{t}\mathcal{E}^{h_{1},h_{2}}_{\lambda,\lambda}(A_{0},A_{1},A_{2};t-s)\Big(\sigma(s,y(s),y(s-h_{1}),y(s-h_{2})) - \sigma(s,z(s),z(s-h_{1}),z(s-h_{2}))\Big)dW(s).
	\end{align}
	
	By using the Jensen's inequality, we deduced that:
	\begin{align}
		&\frac{E\Vert  (\mathscr{L}y)(t)-(\mathscr{L}z)(t)\Vert^{p}}{E_{p\lambda-p+1}(\gamma t^{p\lambda-p+1})}\nonumber\\
		&\leq      \frac{2^{p-1}}{E_{p\lambda-p+1}(\gamma t^{p\lambda-p+1})}E\bigg\Vert\int_{0}^{t}\mathcal{E}^{h_{1},h_{2}}_{\lambda,\lambda}(A_{0},A_{1},A_{2};t-s)\big(f(s,y(s),y(s-h_{1}),y(s-h_{2}))\nonumber\\
		&-f(s,z(s),z(s-h_{1}),z(s-h_{2}))\big)ds\bigg\Vert^{p}\nonumber\\
		&+\frac{2^{p-1}}{E_{p\lambda-p+1}(\gamma t^{p\lambda-p+1})}E\bigg\Vert\int_{0}^{t}\mathcal{E}^{h_{1},h_{2}}_{\lambda,\lambda}(A_{0},A_{1},A_{2};t-s)\big(\sigma(s,y(s),y(s-h_{1}),y(s-h_{2}))\nonumber\\
		&-\sigma(s,z(s),z(s-h_{1}),z(s-h_{2}))\big)dW(s)\bigg\Vert^{p}=\mathcal{J}_{6}+J_{7}.
	\end{align}
	$\bullet$ By employing Hölder's inequality, $(\mathcal{A}_{1})$, Jensen's inequality, Lemma \ref{l2}, $(\mathcal{A}_{2})$, and Lemma \ref{l1}, one can attain the estimation of $\mathcal{J}_{6}$.
	\begin{align}\label{19}
		\mathcal{J}_{6}=&\frac{2^{p-1}}{E_{p\lambda-p+1}(\gamma t^{p\lambda-p+1})}E\bigg\Vert\int_{0}^{t}\mathcal{E}^{h_{1},h_{2}}_{\lambda,\lambda}(A_{0},A_{1},A_{2};t-s)\big(f(s,y(s),y(s-h_{1}),y(s-h_{2}))\nonumber\\
		-&f(s,z(s),z(s-h_{1}),z(s-h_{2}))\big)ds\bigg\Vert^{p}\nonumber\\
		\leq&\frac{6^{p-1}L^{p}_{f}T^{p-1}}{E_{p\lambda-p+1}(\gamma t^{p\lambda-p+1})}\int_{0}^{t}\Vert \mathcal{E}^{h_{1},h_{2}}_{\lambda,\lambda}(A_{0},A_{1},A_{2};t-s)\Vert^{p} E\Vert y(s)-z(s)\Vert^{p}ds\nonumber\\
		+&\frac{6^{p-1}L^{p}_{f}T^{p-1}}{E_{p\lambda-p+1}(\gamma t^{p\lambda-p+1})}\int_{0}^{t}\Vert \mathcal{E}^{h_{1},h_{2}}_{\lambda,\lambda}(A_{0},A_{1},A_{2};t-s)\Vert^{p} E\Vert y(s-h_1)-z(s-h_1)\Vert^{2}ds\nonumber\\
		+&\frac{6^{p-1}L^{p}_{f}T^{p-1}}{E_{p\lambda-p+1}(\gamma t^{p\lambda-p+1})}\int_{0}^{t}\Vert \mathcal{E}^{h_{1},h_{2}}_{\lambda,\lambda}(A_{0},A_{1},A_{2};t-s)\Vert^{p} E\Vert y(s-h_2)-z(s-h_2)\Vert^{p}ds\nonumber\\
		\leq&\frac{6^{p-1}L^{p}_{f}T^{p-1}\mathcal{M}_{4}}{E_{p\lambda-p+1}(\gamma t^{p\lambda-p+1})} \int_{0}^{t}(t-s)^{p\lambda-p} E_{p\lambda-p+1}(\gamma s^{p\lambda-p+1})ds\max_{0\leq t\leq T}\frac{E\Vert \bar{y}^{*}(t)-\bar{z}^{*}(t)}{E_{p\lambda-p+1}(\gamma s^{p\lambda-p+1})}\nonumber\\
		\leq&\frac{6^{p-1}L^{p}_{f}T^{p-1}\mathcal{M}_{4}\Gamma(p\lambda-p+1)}{\gamma}\Vert \bar{y}-\bar{z}\Vert^{p}_{\gamma}
	\end{align}
	$\bullet$ Similarly, we will obtain for $\mathcal{J}_{7}$, in the following form:
	\begin{align}\label{20}
		&\mathcal{J}_{7}\leq\frac{6^{p-1}L^{\sigma}_{\sigma}T^{\frac{p-2}{2}}}{E_{p\lambda-p+1}(\gamma t^{p\lambda-p+1})}\int_{0}^{t}\Vert \mathcal{E}^{h_{1},h_{2}}_{\lambda,\lambda}(A_{0},A_{1},A_{2};t-s)\Vert^{p} E\Vert y(s)-z(s)\Vert^{p}ds\nonumber\\
		+&\frac{6^{p-1}L^{p}_{\sigma}T^{\frac{p-2}{2}}}{E_{p\lambda-p+1}(\gamma t^{p\lambda-p+1})}\int_{0}^{t}\Vert \mathcal{E}^{h_{1},h_{2}}_{\lambda,\lambda}(A_{0},A_{1},A_{2};t-s)\Vert^{p} E\Vert y(s-h_1)-z(s-h_1)\Vert^{2}ds\nonumber\\
		+&\frac{6^{p-1}L^{p}_{\sigma}T^{\frac{p-2}{2}}}{E_{p\lambda-p+1}(\gamma t^{p\lambda-p+1})}\int_{0}^{t}\Vert \mathcal{E}^{h_{1},h_{2}}_{\lambda,\lambda}(A_{0},A_{1},A_{2};t-s)\Vert^{p} E\Vert y(s-h_2)-z(s-h_2)\Vert^{p}ds\nonumber\\
		\leq&\frac{6^{p-1}L^{p}_{\sigma}T^{\frac{p-2}{2}}\mathcal{M}_{4}}{E_{p\lambda-p+1}(\gamma t^{p\lambda-p+1})} \int_{0}^{t}(t-s)^{p\lambda-p} E_{p\lambda-p+1}(\gamma s^{p\lambda-p+1})ds\max_{0\leq t\leq T}\frac{E\Vert \bar{y}^{*}(t)-\bar{z}^{*}(t)}{E_{p\lambda-p+1}(\gamma s^{p\lambda-p+1})}\nonumber\\
		\leq&\frac{6^{p-1}L^{p}_{\sigma}T^{\frac{p-2}{2}}\mathcal{M}_{4}\Gamma(p\lambda-p+1)}{\gamma}\Vert \bar{y}-\bar{z}\Vert^{p}_{\gamma}.
	\end{align}
	
	By using (\ref{19}) and(\ref{20}), we will achieve the following outcomes.
	\begin{align}\label{13}
		\frac{E\Vert  (\mathscr{L}y)(t)-(\mathscr{L}z)(t)\Vert^{p}}{E_{p\lambda-p+1}(\gamma t^{p\lambda-p+1})}\leq&\bigg(\frac{6^{p-1}(L^{p}_{f}T^{p-1}+L^{p}_{\sigma}T^{\frac{p-2}{2}})\mathcal{ M}_{4}\Gamma(p\lambda-p+1)}{\gamma}\bigg)\Vert \bar{y}-\bar{z}\Vert^{p}_{\gamma}\nonumber\\
		=&K\Vert \bar{y}-\bar{z}\Vert^{p}_{\gamma}.
	\end{align}
	By taking weighted maximum norm (\ref{m1}), we achieve
	\begin{align}
		\Vert  (\mathscr{L}y)(t)-(\mathscr{L}z)(t)\Vert^{p}_{\gamma}\leq K\Vert \bar{y}-\bar{z}\Vert^{p}_{\gamma}
	\end{align}
	Since $K < 1$ in accordance with $(\mathcal{A}_{3})$, the operator $\mathscr{L}$ is acknowledged as a contraction mapping on $H^{p}_{\varphi}$. Consequently, there exists a unique fixed point $y \in H^{p}_{\varphi}$, establishing it as the solution to (\ref{1}).
\end{proof}
\subsection{Finite-time stability of the system (\ref{1})}
In this section of the article, we intend to demonstrate the finite-time stability of equation (\ref{1}) through the following theorem. Before presenting the theorem, we define finite-time stability of system as follows:
\begin{definition}\label{d1}\cite{42} Given positive numbers $\Lambda, \varepsilon$ satisfying $\Lambda < \varepsilon$, the system is finite-time stable if $\|\Phi\|_{\gamma} \leq \Lambda$ implies $\|y\|_{\gamma} \leq \varepsilon$ with respect to $\{\Lambda, \varepsilon, -h, T\}$, for all $t \in [-h, T]$.
\end{definition}
Now, we prove the theorem concerning the finite-time stability of the system described by equation (\ref{1}) over the given interval.
\begin{theorem}
	Assuming that Assumptions ($\mathcal{A}_{1}-\mathcal{A}_{4}$) hold, and there exist two positive numbers $\Lambda$, $\varepsilon$ satisfying $\Lambda < \varepsilon$, and $\|\Phi\|_{\gamma} < \Lambda$, then system (\ref{1}) is finite-time stable on $[-h,T]$ provided that
	\begin{align*}
		\Lambda = & \frac{\varepsilon}{\tilde{M} e^{C(3T-h_{1}-h_{2})}} - 5^{p-1}(L_{f}+C_p L_{\sigma})T^{p}\mathcal{ M}_{4},
	\end{align*}
	where $\tilde{M} = 5^{p-1}\mathcal{ M}_{1}(1+\Vert A_{2}\Vert^{p})+5^{p-1}\Big(\mathcal{ M}_{2}\Vert A_{1}\Vert^{p}+\mathcal{ M}_{3}\Vert A_{2}\Vert^{p}\Big)h^{p-1} + 2C h^{\frac{p-1}{p}}.$
\end{theorem}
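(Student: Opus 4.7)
The plan is to bound $E\|y(t)\|^p$ starting from the mild solution in Definition \ref{d1}, reduce to a delay-type integral inequality of Gronwall form, and close with the Bainov--Simeonov lemma (Lemma \ref{bay}). First, I would apply Jensen's inequality to the mild representation of $y(t)$ to split $E\|y(t)\|^p$ into five pieces: the homogeneous term, the two history integrals over $[-h_1,0]$ and $[-h_2,0]$, the drift integral, and the stochastic integral. For the first three pieces I would use the uniform bounds $\mathcal{M}_1,\mathcal{M}_2,\mathcal{M}_3$ on the delayed Mittag--Leffler matrix functions (Lemma 1) together with H\"older's inequality to produce the factors $h_1^{p-1}$ and $h_2^{p-1}$, obtaining a contribution of the form $\bigl[5^{p-1}\mathcal{M}_1(1+\|A_2\|^p)+5^{p-1}(\mathcal{M}_2\|A_1\|^p+\mathcal{M}_3\|A_2\|^p)h^{p-1}\bigr]\Phi^p$, which is the first group of terms in $\tilde M$.

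For the drift piece I would combine H\"older, the linear-growth hypothesis $(\mathcal{A}_4)$ on $f$, and the bound $\|\mathcal{E}^{h_1,h_2}_{\lambda,\lambda}(\cdot;t-s)\|^p \leq \mathcal{M}_4$, which splits naturally into an absolute constant proportional to $L_f T^p \mathcal{M}_4$ coming from the ``$1$'' in $(\mathcal{A}_4)$ and three delayed integrals involving $E\|y(s)\|^p$, $E\|y(s-h_1)\|^p$, and $E\|y(s-h_2)\|^p$. For the stochastic piece I would apply the Burkholder--Davis--Gundy inequality (constant $C_p$) followed by H\"older and the linear-growth bound on $\sigma$, producing an analogous $C_p L_\sigma T^p \mathcal{M}_4$ constant and the same three delayed integrals. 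Collecting everything yields an estimate of the form
\begin{align*}
E\|y(t)\|^p \leq \tilde M_0\, \Phi^p + 5^{p-1}(L_f+C_p L_\sigma)T^p\mathcal{M}_4 + C\!\int_0^t\! E\|y(s)\|^p\,ds + C\sum_{i=1}^{2}\!\int_0^t\! E\|y(s-h_i)\|^p\,ds,
\end{align*}
with explicit $C$ absorbing the Lipschitz and Mittag--Leffler constants.

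Next I would apply Lemma \ref{bay} with $t_0=0$, $n=2$, $E_i=[0,h_i]$, $\psi(s)=\|\varphi(s)\|^p$, constant coefficients $b\equiv c_i\equiv C$, and the constant majorant $g$ just identified. The exponential factor becomes $\exp\!\bigl(\int_0^T C\,ds+\sum_{i=1}^{2}\int_{[0,T]\setminus E_i} C\,ds\bigr)=e^{C(3T-h_1-h_2)}$, which matches the stated exponent exactly. The boundary contributions $\int_{[0,T]\cap E_i} c_i(s)\psi(s-h_i)\,ds$, after bounding $\psi$ by $\Phi^p$ and a final H\"older step, contribute the remaining $2Ch^{(p-1)/p}$ summand in $\tilde M$. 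Taking the weighted maximum norm \eqref{m1}, dividing by $E_{p\lambda-p+1}(\gamma t^{p\lambda-p+1})$, and substituting $\|\Phi\|_\gamma \leq \Lambda$ with the prescribed formula for $\Lambda$ produces $\|y\|_\gamma \leq \varepsilon$, which is finite-time stability in the sense of Definition \ref{d1}. The principal obstacle I anticipate is bookkeeping: one must track the H\"older exponents $p$ and $p/(p-1)$ carefully so that the additive $5^{p-1}(L_f+C_p L_\sigma)T^p\mathcal{M}_4$ (coming purely from the constant ``$1$'' in $(\mathcal{A}_4)$) and the $2Ch^{(p-1)/p}$ term (coming from the boundary integrals in Lemma \ref{bay}) appear with exactly the constants stated in the theorem; the delay structure itself is handled cleanly by the Bainov--Simeonov lemma, so no new analytic input is required beyond what Lemmas \ref{l2}, \ref{l1}, \ref{bay} and BDG already provide.
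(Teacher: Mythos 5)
Your proposal follows essentially the same route as the paper's own proof: the same Jensen/H\"older/BDG decomposition of the mild solution into five pieces, the same use of $(\mathcal{A}_4)$ to isolate the additive constant $5^{p-1}(L_f+C_pL_\sigma)T^p\mathcal{M}_4$, reduction to the same delay Gronwall inequality with constant $C$, closure via the Bainov--Simeonov lemma yielding the factor $e^{C(3T-h_1-h_2)}$ and the boundary term $2Ch^{(p-1)/p}$, and the same final passage to the weighted norm. No substantive difference to report.
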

\begin{proof}
	From Equation (\ref{1}), for \( t \in [0, T]\), utilizing Jensen's inequality, Hölder inequality, Assumption ($\mathcal{A}_{4}$), and Burkholder-Davis-Gundy inequality, we obtain:
	\begin{align*}
		&E(\Vert y(t)\Vert^{p})\leq 5^{p-1}\mathcal{ M}_{1}E\Vert \varphi(0)-A_2 \varphi(-h_2)\Vert^{p}+5^{p-1}\mathcal{ M}_{2}\Phi^{p}\Vert A_{1}\Vert^{p}h^{p-1}_{1}+5^{p-1}\mathcal{ M}_{3}\Phi^{p}\Vert A_{2}\Vert^{p}h^{p-1}_{2}\\
		&+5^{p-1}t^{p-1}\mathcal{ M}_{4}L_{f}E\int_{0}^{t}(1+\Vert y(s)\Vert^{p}+\Vert y(s-h_{1})\Vert^{p}+\Vert y(s-h_{2})\Vert^{p})ds\\
		&+5^{p-1}C_{p}t^{\frac{p-2}{2}}\mathcal{ M}_{4}L_{\sigma}E\int_{0}^{t}(1+\Vert y(s)\Vert^{p}+\Vert y(s-h_{1})\Vert^{p}+\Vert y(s-h_{2})\Vert^{p})ds\\
		&\leq 5^{p-1}\mathcal{ M}_{1}E\Vert \varphi(0)-A_2 \varphi(-h_2)\Vert^{p}+5^{p-1}\mathcal{ M}_{2}\Phi^{p}\Vert A_{1}\Vert^{p}h^{p-1}+5^{p-1}\mathcal{ M}_{3}\Phi^{p}\Vert A_{2}\Vert^{p}h^{p-1}\\
		&+5^{p-1}t^{p}\mathcal{ M}_{4}L_{f}+5^{p-1}T^{p-1}\mathcal{ M}_{4}L_{f}\int_{0}^{t}E(\Vert y(s)\Vert^{p}+\Vert y(s-h_{1})\Vert^{p}+\Vert y(s-h_{2})\Vert^{p})ds\\
		&+5^{p-1}C_{p}t^{\frac{p}{2}}\mathcal{ M}_{4}L_{\sigma}+5^{p-1}C_{p}T^{\frac{p-2}{2}}\mathcal{ M}_{4}L_{\sigma}\int_{0}^{t}E(\Vert y(s)\Vert^{p}+\Vert y(s-h_{1})\Vert^{p}+\Vert y(s-h_{2})\Vert^{p})ds\\
		&\leq 5^{p-1}\mathcal{ M}_{1}E\Vert \varphi(0)-A_2 \varphi(-h_2)\Vert^{p}+5^{p-1}\Big(\mathcal{ M}_{2}\Vert A_{1}\Vert^{p}+\mathcal{ M}_{3}\Vert A_{2}\Vert^{p}\Big)\Phi^{p}h^{p-1}
		+5^{p-1}(L_{f}+C_p L_{\sigma})t^{p}\mathcal{ M}_{4}\\
		&+5^{p-1}(L_{f}+C_p L_{\sigma})t^{p-1}\mathcal{ M}_{4}\int_{0}^{t}\Big(E\big(\Vert y(s)\Vert^{p}\big)+E\big(\Vert y(s-h_{1})\Vert^{p}\big)+E\big(\Vert y(s-h_{2})\Vert^{p}\big)\Big)ds\\
	\end{align*}
	Letting $\beta(t)=5^{p-1}\mathcal{ M}_{1}E\Vert \varphi(0)-A_2 \varphi(-h_2)\Vert^{p}+5^{p-1}\Big(\mathcal{ M}_{2}\Vert A_{1}\Vert^{p}+\mathcal{ M}_{3}\Vert A_{2}\Vert^{p}\Big)\Phi^{p}h^{p-1}+5^{p-1}(L_{f}+C_p L_{\sigma})t^{p}\mathcal{ M}_{4}$ and $C=5^{p-1}(L_{f}+C_p L_{\sigma})t^{p-1}\mathcal{ M}_{4}$, then we will get
	\begin{align}\label{qt}
		E(\Vert y(t)\Vert^{p})\leq&\beta(t)+C\int_{0}^{t}\Big(E\big(\Vert y(s)\Vert^{p}\big)+E\big(\Vert y(s-h_{1})\Vert^{p}\big)+E\big(\Vert y(s-h_{2})\Vert^{p}\big)\Big)ds
	\end{align}
	By applying Lemma \ref{bay} and utilizing the inequality (\ref{qt}) with $y(t)=\varphi(t)$ for $t\in [-h,0]$ as the initial condition, we obtain
	\begin{align*}
		E(\Vert y(t)\Vert^{p})\leq&\bigg[\beta(t)+C\sum_{i=1}^{2}\int_{[0,t]\cap [0,h_i]}E\Vert\varphi(s-h_{i})\Vert^{p}ds\bigg] e^{Ct+\sum_{i=1}^{2}\int_{[0,t]\setminus [0,h_i]} C ds}\\
		\leq&\bigg[\beta(t)+2C\int_{-h}^{0}E\Vert\varphi(s)\Vert^{p}ds\bigg] e^{C(3t-h_1-h_2)}
		\leq\bigg[\beta(t)+2CE(\Phi^{p}) h^{\frac{p-1}{p}}\bigg] e^{C(3T-h_1-h_2)}\\
		=&\bigg[5^{p-1}\mathcal{ M}_{1}E\Vert \varphi(0)-A_2 \varphi(-h_2)\Vert^{p}+5^{p-1}\Big(\mathcal{ M}_{2}\Vert A_{1}\Vert^{p}+\mathcal{ M}_{3}\Vert A_{2}\Vert^{p}\Big)E(\Phi^{p})h^{p-1}\\
		+&5^{p-1}(L_{f}+C_p L_{\sigma})t^{p}\mathcal{ M}_{4}+2CE(\Phi^{p}) h^{\frac{p-1}{p}}\bigg] e^{C(3T-h_1-h_2)}\\
		\leq&\bigg[5^{p-1}\mathcal{ M}_{1}(1+\Vert A_{2}\Vert^{p})E(\Phi^{p})+5^{p-1}\Big(\mathcal{ M}_{2}\Vert A_{1}\Vert^{p}+\mathcal{ M}_{3}\Vert A_{2}\Vert^{p}\Big)E(\Phi^{p})h^{p-1}\\
		+&5^{p-1}(L_{f}+C_p L_{\sigma})T^{p}\mathcal{ M}_{4}+2CE(\Phi^{p}) h^{\frac{p-1}{p}}\bigg] e^{C(3T-h_1-h_2)}\\
		=&\bigg(\tilde{M}E(\Phi^{p})+5^{p-1}(L_{f}+C_p L_{\sigma})T^{p}\mathcal{ M}_{4}\bigg)e^{C(3T-h_1-h_2)}
	\end{align*}
	where $\tilde{M}=5^{p-1}\mathcal{ M}_{1}(1+\Vert A_{2}\Vert^{p})+5^{p-1}\Big(\mathcal{ M}_{2}\Vert A_{1}\Vert^{p}+\mathcal{ M}_{3}\Vert A_{2}\Vert^{p}\Big)h^{p-1}
	+2C h^{\frac{p-1}{p}}.$
	
	Therefore, due to the norm of $\Vert \cdot\Vert_{\gamma}$, we get
	\begin{align*}
		\frac{	E(\Vert y(t)\Vert^{p})}{E_{p\lambda-p+1}(\gamma t^{p\lambda-p+1})}\leq&\frac{1}{E_{p\lambda-p+1}(\gamma t^{p\lambda-p+1})}\bigg(\tilde{M}E(\Phi^{p})+5^{p-1}(L_{f}+C_p L_{\sigma})T^{p}\mathcal{ M}_{4}\bigg)e^{C(3T-h_1-h_2)}
	\end{align*}
	Therefore, according to the weight maximum norm and Eq. (\ref{1}), we
	have
	\begin{align*}
		\Vert y\Vert_{\gamma}\leq&\big(\tilde{M} \Vert \Phi\Vert_{\gamma}+5^{p-1}(L_{f}+C_p L_{\sigma})T^{p}\mathcal{ M}_{4}\big)e^{C(3T-h_1-h_2)}\\
		\leq&\big(\tilde{M} \Lambda+5^{p-1}(L_{f}+C_p L_{\sigma})T^{p}\mathcal{ M}_{4}\big)e^{C(3T-h_1-h_2)}\leq\varepsilon\\
	\end{align*}
	Using definition \ref{d1}, we can deduce that the system (\ref{1}) is finite-time stable over the interval $[-h,T]$.  
\end{proof}

\section{Examples}

In this section, we present various instances to illustrate the outcomes established in the preceding section.
\begin{example}
	Consider the fractional stochastic neutral delay differential equation given by:
	
	\[
	\begin{cases}
		{^{C}}D^{0.5}_0 y(t)= A_{0}y(t) + A_{1}y(t - 1)+ A_{2}D^{0.5}_0 y(t - 0.5) + \cos(y(t-1))+\sin(y(t-0.5))\frac{dW(t)}{dt}, \quad t\in[0,2],\\
		y(t) = 0, \quad t \in [-1, 0].
	\end{cases}
	\]\label{e1}
	
	Where the matrices are defined as:
	
	\[
	\begin{aligned}
		A_{0}=\begin{pmatrix}
			-1&2\\
			0&1\\
		\end{pmatrix}, \quad A_{1}=\begin{pmatrix}
			2&4\\
			1&0\\
		\end{pmatrix}, \quad A_{2}=\begin{pmatrix}
			3&0.5\\
			0&-2\\
		\end{pmatrix}.
	\end{aligned}
	\]
	
	These matrices satisfy the conditions:
	
	\[
	\begin{aligned}
		A_0 A_1 &\neq A_1 A_0, \\
		A_0 A_2 &\neq A_2 A_0, \\
		A_1 A_2 &\neq A_2 A_1.
	\end{aligned}
	\]
	
	Utilizing the definition of \ref{d1}, we obtain the following solution for the above equation:
	
	\[
	\begin{aligned}
		y(t)=&\int_{0}^{t}\mathcal{E}_{0.5, 0.5}^{1, 0.5}(A_{0}, A_{1}, A_{2}, t-s)\cos(y(s-1))ds\\
		&+\int_{0}^{t}\mathcal{E}_{0.5, 0.5}^{1, 0.5}(A_{0}, A_{1}, A_{2}, t-s)\sin(y(s-0.5))dW(s).
	\end{aligned}
	\]
	
	This results in:
	
	\[
	\mathcal{E}_{0.5,0.5}^{1 ,0.5}(A_0, A_1, A_2;t) :=
	\begin{cases}
		\emptyset, \quad -h \leq t < 0, \\
		I, \quad t = 0, \\
		\sum_{k=0}^{\infty} \sum_{m_1=0}^{\infty} \sum_{m_2=0}^{\infty} Q_{k+1}( m_1 , 0.5 m_2 )\frac{(t - m_1 -0.5 m_2 )_{+}^{0.5 k-0.5}}{\Gamma(0.5 k+0.5)} , & t \in \mathbb{R}^+,
	\end{cases}
	\]
	
	where:
	
	\[
	(t - m_1  -0.5 m_2 )_+ =
	\begin{cases}
		t - m_1  - 0.5 m_2, & t \geq  m_1 + 0.5 m_2 , \\
		0, & t < m_1  + 0.5 m_2.
	\end{cases}
	\]	
	
	Furthermore, the sequence $Q_{k+1}(m_{1}, 0.5 m_{2})$ is governed by the recursive formula:
	
	\[
	\begin{aligned}
		Q_{k+1}(m_{1}, 0.5 m_{2})= &A_{0}Q_{k} (m_{1}, 0.5 m_{2})+A_{1} Q_{k} (m_{1} -1, 0.5 m_{2})\\
		&+ A_{2} Q_{k+1}(m_{1}, 0.5(m_{2} -1)), \quad k = 1, 2, \ldots,
	\end{aligned}
	\]
	
	with initial conditions:
	
	\[
	\begin{aligned}
		Q_{0}(m_{1}, 0.5 m_{2} ) &= Q_{k} (-1, 0.5 m_{2} ) = Q_{k} (m_{1}, -0.5) =\emptyset, \quad k = 0, 1, 2, \ldots,\\
		Q_{1}(0, 0) &= I, \quad Q_{1}(m_{1}, 0.5 m_{2}) =\emptyset.
	\end{aligned}
	\]
	
	Here, $I \in R^{2\times 2}$ and $\emptyset\in R^{2\times 2}$ represent identity and zero matrices, respectively.   
	
	Certainly, the Lipschitz condition for the given equation is easily verified by examining the following expressions:
	
	1. The Lipschitz condition for the function \(f(t, y(t), y(t-0.5), y(t-1))\) can be checked in the form:
	
	\begin{align*}
		&\Vert f(t,y(t),y(t-0.5),y(t-1))-f(t,z(t),z(t-0.5),z(t-1))\Vert^{p}\\
		&=\Vert \cos(y(t-1))-\cos(z(t-1))\Vert^{p}\\
		&=\bigg\Vert -2 \sin \big(\frac{y(t-1)-z(t-1)}{2}\big) \sin  \big(\frac{y(t-1)+z(t-1)}{2}\big)\bigg\Vert^{p}\\
		&\leq \Vert y(t-1)-z(t-1)\Vert^{p},
	\end{align*}
	
	This implies \(L_f = 1\).
	
	2. Similarly, the Lipschitz condition for the function \(\sigma(t, y(t), y(t-0.5), y(t-1))\) is verified as follows:
	
	\begin{align*}
		&\Vert \sigma(t,y(t),y(t-0.5),y(t-1))-\sigma(t,z(t),z(t-0.5),z(t-1))\Vert^{p}\\
		&=\Vert \sin(y(t-0.5))-\sin(z(t-0.5))\Vert^{p}\\
		&=\bigg\Vert 2 \sin \big(\frac{y(t-0.5)-z(t-0.5)}{2}\big) \cos  \big(\frac{y(t-0.5)+z(t-0.5)}{2}\big)\bigg\Vert^{p}\\
		&\leq \Vert y(t-0.5)-z(t-0.5)\Vert^{p}.
	\end{align*}
	
	This implies \(L_\sigma = 1\).
	
	Therefore, it can be concluded that both \(L_f\) and \(L_\sigma\) are equal to 1 based on the derived inequalities.
	This indicates that under the conditions ($\mathcal{A}_{1}-\mathcal{A}_{4}$), with Lipschitz constant \(L_f =L_{\sigma}=1\), Theorems  3.1 and 3.2 guarantee the existence and uniqueness of a continuous solution for Equation (\ref{1}). Moreover, this solution is stable in the Finite time sense on the interval \([-1, 2]\).
	
	In Figure 1, the simulation displays the solution \(y(\cdot) \in C^{1}([-1, 2], \mathbb{R}^{2}_{+})\) of the Cauchy problem (\ref{1}), considering a fractional-order neutral delay differential equation with an initial data \(\varphi(\cdot)=0 \).
	\begin{figure}[h]
		\centering
		\includegraphics[width=0.6\textwidth]{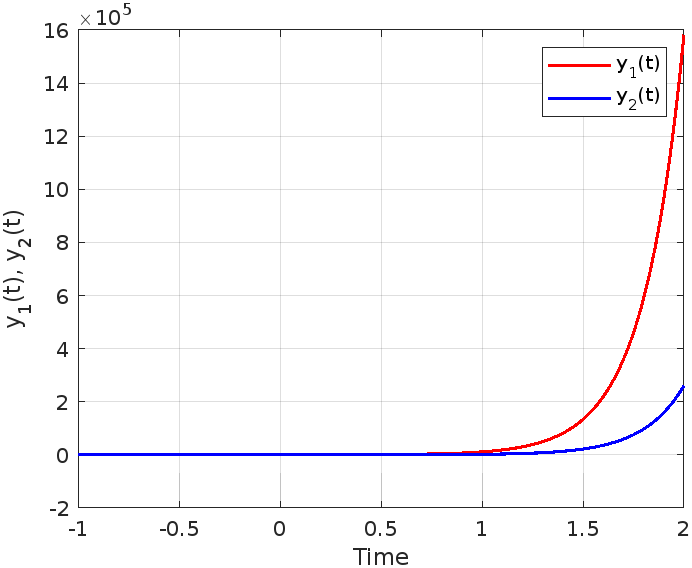}
		\caption{\\
		}\label{}
	\end{figure}
\end{example}
\begin{example}
	Consider the following stochastic fractional delay differential equation.
	\begin{equation}\label{example_equation}
		\begin{cases}
			{^{C}}D^{0.5}_0 y(t)= A_{0}y(t) + A_{1}y(t - 0.5)+ A_{2}D^{0.5}_0 y(t - 1) + \sin(t + y(t - 0.5) + y(t - 1))\\
			+\cos(t + y(t - 0.5) + y(t - 1))\frac{dW(t)}{dt}, \quad t\in[0,10],\\
			y(t) = \varphi(t), \quad t \in [-1, 0].
		\end{cases}
	\end{equation}
	
	In this example:
	
	$\bullet$ \( {^{C}}D^{0.5}_0 y(t) \) is the Caputo fractional derivative of the function \( y(t) \).
	
	$\bullet$ The linear terms \( A_{0}y(t) \), \( A_{1}y(t - 0.5) \), and \( A_{2}D^\lambda_0 y(t - 1) \) are included.
	
	$\bullet$ The nonlinear term \( \sin(t + y(t - 0.5) + y(t - 1)) \) represents a nonlinear function of the system.
	
	$\bullet$ The stochastic term \(\cos(t + y(t - 0.5) + y(t - 1))\frac{dW(t)}{dt} \) involves a Wiener process \( W(t) \).
	
	$\bullet$ The initial condition is set to \begin{align*}
		y(t)=\varphi(t)=e^{t}.
	\end{align*}
	
	Where the matrices are defined as:
	
	\[
	\begin{aligned}
		A_{0}=\begin{pmatrix}
			-1&2\\
			0&1\\
		\end{pmatrix}, \quad A_{1}=\begin{pmatrix}
			2&4\\
			1&0\\
		\end{pmatrix}, \quad A_{2}=\begin{pmatrix}
			3&0.5\\
			0&-2\\
		\end{pmatrix}.
	\end{aligned}
	\]
	Given the parallel conditions observed in the first example, where matrices yielded the same result, and noting that the Mittag-Leffler function adheres to analogous conditions as illustrated in Example 4.1, we can express the solution for the equation using the definition denoted by Equation (1).
	\begin{align*}
		y(t)=&\mathcal{E}^{0.5,1}_{0.5,1}(A_{0},A_{1},A_{2};t)(1-A_{2}e^{-1})+\int_{-0.5}^{0}\mathcal{E}^{0.5,1}_{0.5,0.5}(A_{0},A_{1},A_{2};t-0.5-s)A_{1}e^{s}ds\\
		+&\int_{-1}^{0}\mathcal{E}^{0.5,1}_{0.5,0}(A_{0},A_{1},A_{2};t-1-s)A_{2}e^{s}ds\\
		+&\int_{0}^{t}\mathcal{E}^{0.5,1}_{0.5,0.5}(A_{0},A_{1},A_{2};t-s) \sin(s + y(s - 0.5) + y(s - 1))ds\\
		+&\int_{0}^{t}\mathcal{E}^{0.5,1}_{0.5,0.5}(A_{0},A_{1},A_{2};t-s) \cos(s + y(s - 0.5) + y(s - 1))dW(s)\\
		y(t)=& e^{t} \quad \text{if } t \in [-1, 0].
	\end{align*}
	If we confirm that the conditions $\mathcal{A}_{1}-\mathcal{A}_{4}$ hold for the specified equation, Theorems 3.1 and 3.2 guarantee the presence of a continuous solution for Equation (\ref{1}), ensuring both its existence and uniqueness. Furthermore, this solution exhibits finite-time stability within the interval $[-1,10]$.
\end{example}
\section{Conclusion}

This paper focuses on delivering comprehensive findings regarding the stability analysis of fractional stochastic neutral delay differential equations (FSNDDEs). We have established the finite-time stability for FSNDDEs by leveraging the existence and uniqueness of solutions. Our stability analyses rely on employing suitable weighted maximum norms and making assumptions on nonlinear terms consistent with finite-dimensional stochastic theory. Additionally, we have provided illustrative examples to reinforce the validity of our results.

\end{document}